\documentclass[11pt]{amsart}
\usepackage{a4wide,amssymb,color}
\usepackage{relsize}
\usepackage[all]{xy}
\usepackage{bbm}
\newcommand{\korin}[2]{\!\sqrt[#1]{\!#2}}

\newcommand*{\defeq}{\stackrel{\mathsmaller{\mathsf{def}}}{=}}
\newcommand{\w}{\omega}

\newcommand{\IN}{\mathbb N}
\newcommand{\Ra}{\Rightarrow}

\newcommand{\two}{\mathbbm 2}
\newcommand{\IZ}{\mathbb Z}

\newtheorem{theorem}{Theorem}[section]
\newtheorem{proposition}[theorem]{Proposition}
\newtheorem{lemma}[theorem]{Lemma}
\newtheorem{corollary}[theorem]{Corollary}
\newtheorem{claim}[theorem]{Claim}

\newtheorem{example}[theorem]{Example}

\theoremstyle{definition}
\newtheorem{definition}[theorem]{Definition}
\newtheorem{remark}[theorem]{Remark}

\title{$E$-separated semigroups}
\author{Taras Banakh}
\address{Ivan Franko National University of Lviv, Universytetska 1, 79000, Lviv, Ukraine and Jan Kochanowski University in Kielce, Poland.}
\email{t.o.banakh@gmail.com}
\subjclass{20M10}
\keywords{$E$-central semigroup, the least semilattice congruence, the binary quasiorder}

\begin{document}
\begin{abstract} A semigroup is called {\em $E$-separated} if for any distinct idempotents $x,y\in X$ there exists a homomorphism $h:X\to Y$ to a semilattice $Y$ such that $h(x)\ne h(y)$. Developing results of Putcha and Weissglass, we characterize $E$-separated semigroups via certain commutativity properties of idempotents of $X$. Also we characterize $E$-separated semigroups in the class of $\pi$-regular $E$-semigroups.
\end{abstract}


\maketitle

\section{Introduction}

In this paper we introduce and study $E$-separated $E$-semigroups. A semigroup $X$ is defined to be {\em $E$-separated} if for any distinct idempotents $x,y\in X$ there exists a homomorphism $h:X\to Y$ to a semilattice $Y$ such that $h(x)\ne h(y)$. We recall that a {\em semilattice} is a commutative semigroup of idempotents. An element $x$ of a semigroup $X$ is an {\em idempotent} if $xx=x$. A semigroup $X$ is called an {\em $E$-semigroup} if the set $E(X)\defeq\{x\in X:xx=x\}$ is a subsemigroup of $X$. 

Developing results of Putcha and Weissglass \cite{PW}, in Theorem~\ref{t:separ}  we characterize $E$-separated semigroup via suitable commutativity properties of the idempotents of the semigroup. 

In Proposition~\ref{p:duo-Esep} we prove that the class of $E$-separated $E$-semigroups contains all duo semigroups (and hence all commutative semigroups). A semigroup $X$ is called  {\em  duo} if $xX=Xx$ for every $x\in X$. It is clear that each commutative semigroup is duo.
In Theorem~\ref{t:Esepar} we establish some structural properties of $E$-separated $E$-semigroups. In particular, we distinguish a natural subsemigroup ${\Updownarrow}E(X)$ of $X$ that admits homomorphic retractions onto the semilattice $E(X)$ and also on the Clifford part $H(X)\defeq\bigcup_{e\in E(X)}H_e$ of $X$. 

In Theorem~\ref{t:central} we characterize $E$-separated semigroups within the class of $\pi$-regular $E$-semigroups.
 
The main instrument for studying $E$-separated semigroups is the binary quasiorder whose properties are discussed in Section~\ref{s:binary}. 

\section{Preliminaries}\label{s:binary}

In this section we collect some standard notions that will be used in the paper. We refer to \cite{Howie} for Fundamentals of Semigroup Theory. 

We denote by $\w$  the set of all finite ordinals and by $\IN\defeq\w\setminus\{0\}$ the set of all positive integer numbers.

Let $X$ be a semigroup. For an element $x\in X$ let $$x^\IN\defeq\{x^n:n\in\IN\}$$ be the monogenic subsemigroup of $X$, generated by the element $x$. For two subsets $A,B\subseteq X$, let $AB\defeq\{ab:a\in A,\;b\in B\}$ be the product of $A,B$ in $X$. For a subset $A\subseteq X$ and number $n\in\IN$, let
$$\korin{n}{A}\defeq\{x\in X:x^n\in A\}\quad\mbox{and}\quad \korin{\infty}{A}\defeq\bigcup_{n\in\IN}\korin{n}{A}.$$

For an element $a$ of a semigroup $X$, the set
$$H_a=\{x\in X:(xX^1=aX^1)\;\wedge\;(X^1x=X^1a)\}$$
is called the {\em $\mathcal H$-class} of $a$.
Here $X^1=X\cup\{1\}$ where $1$ is an element such that $1x=x=x1$ for all $x\in X^1$.

By Corollary 2.2.6 \cite{Howie}, for every idempotent $e\in E(X)$ its $\mathcal H$-class $H_e$ coincides with the maximal subgroup of $X$, containing the idempotent $e$. The union $$H(X)=\bigcup_{e\in E(X)}H_e$$ of all maximal subgroups of $X$ is called the {\em Clifford part} of $X$ (it should be mentioned that $H(X)$ is not necessarily a subsemigroup of $X$).

For any element $x\in H(X)$, there exists a unique element $x^{-1}\in H(X)$ such that $$xx^{-1}x=x,\quad x^{-1}xx^{-1}=x^{-1},\quad\mbox{and}\quad xx^{-1}=x^{-1}x.$$

The set $$\korin{\infty}{H(X)}=\bigcup_{e\in E(X)}\korin{\infty}{H_e}$$is called the {\em eventually Clifford part} of $X$. Let $\pi:\korin{\infty}{H(X)}\to E(X)$ be the function assigning to each $x\in \korin{\infty}{H(X)}$ the unique idempotent $e\in E(X)$ such that $x^\IN\cap H_e\ne\emptyset$. The following lemma shows that the function $\pi$ is well-defined.

\begin{lemma}\label{l:pi-well-defined} Let $x$ be an element of a semigroup $X$ such that $x^n\in H_e$ for some $n\in\IN$ and $e\in E(X)$. Then $x^m\in H_e$ for all $m\ge n$.
\end{lemma}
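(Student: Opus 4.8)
The plan is to prove the stronger fact that $x^m$ lies in the $\mathcal H$-class $H_e$, by verifying the two defining equalities $x^mX^1=eX^1$ and $X^1x^m=X^1e$ separately. The case $m=n$ is the hypothesis, so assume $m>n$. Since $x^n\in H_e$ and, by Corollary~2.2.6 of \cite{Howie}, $H_e$ is the maximal subgroup of $X$ with identity $e$, all powers $x^{kn}=(x^n)^k$ with $k\in\IN$ lie in $H_e$, and each such power has a group inverse $(x^{kn})^{-1}\in H_e$ satisfying $x^{kn}(x^{kn})^{-1}=(x^{kn})^{-1}x^{kn}=e$. Also, by the definition of the $\mathcal H$-class, the membership $x^n\in H_e$ already gives $x^nX^1=eX^1$ and $X^1x^n=X^1e$.

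For the inclusion $x^mX^1\subseteq eX^1$ I would write $x^m=x^nx^{m-n}$ and use $x^{m-n}X^1\subseteq X^1$ together with $x^nX^1=eX^1$. For the reverse inclusion, choose $k\in\IN$ with $kn-m\ge1$ (for instance $k=m+1$) and observe that $e=x^{kn}(x^{kn})^{-1}=x^m\cdot\bigl(x^{kn-m}(x^{kn})^{-1}\bigr)\in x^mX^1$; since $e\in x^mX^1$ forces $eX^1\subseteq x^mX^1$, this yields $x^mX^1=eX^1$. The equality $X^1x^m=X^1e$ follows by the symmetric argument, using $x^m=x^{m-n}x^n$ and $e=(x^{kn})^{-1}x^{kn}=\bigl((x^{kn})^{-1}x^{kn-m}\bigr)\cdot x^m$. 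Together these two equalities give $x^m\in H_e$.

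I do not expect a genuine obstacle: the statement is essentially a bookkeeping exercise with Green's relations. The one thing worth noting is that one should not try to prove it by directly exhibiting a group inverse of $x^m$ at $e$, since $x$ need not commute with $x^n$ or with $(x^n)^{-1}$, so the obvious candidate inverse works on only one side; splitting into the two halves of the definition of $H_e$ sidesteps this. The only other point to keep track of is the choice of $k$, which must keep the exponents $m-n$, $kn-m$ and $kn$ positive — hence the innocuous assumptions $m>n$ and $k\ge m+1$.
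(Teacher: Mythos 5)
Your proof is correct. Both you and the paper establish $x^m\in H_e$ by verifying the two defining equalities $x^mX^1=eX^1$ and $X^1x^m=X^1e$, and in both cases the nontrivial inclusion $eX^1\subseteq x^mX^1$ comes from writing $e$ as a product of $x^m$ with a tail built from the group inverse of a high power of $x$. The difference is organizational: the paper argues by minimal counterexample, so it only has the induction hypothesis $x^{m-1}\in H_e$ to work with and therefore uses the inverse of $x^{2(m-1)}$, factoring $x^{2(m-1)}=x^m x^{m-2}$ (which is why it needs the small observation $m>n>1$ to make $m-2$ a positive exponent). You instead work directly from the hypothesis $x^n\in H_e$, take the inverse of $x^{kn}$ for $k$ large enough that $kn-m\ge 1$, and avoid the induction entirely. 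Your version is slightly more direct and sidesteps the exponent bookkeeping around $m-2$; the paper's version gets away with a smaller power of $x$ at the cost of the minimality argument. Your closing remark about why one should not hunt for a two-sided group inverse of $x^m$ directly is apt and reflects exactly why both proofs split the verification into the two halves of the definition of $H_e$.
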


\begin{proof} To derive a contradiction, assume that $x^m\notin H_e$ for some $m\ge n$. We can assume that $m$ is the smallest number such that $x^m\notin H_e$. It follows from $x^n\in H_e$ and $x^m\notin H_e$ that $m>n>1$ and hence $m-2\in\IN$. The minimality of $m$ ensures that $x^{m-1}\in H_e$. Observe that $x^{m}X^1\subseteq x^{m-1}X^1=ex^{m-1}X^1\subseteq eX^1$ and $$eX^1=x^{2(m-1)}(x^{2(m-1)})^{-1}X^1\subseteq x^{2(m-1)}X^1=x^{m}x^{m-2}X^1\subseteq x^{m}X^1.$$Therefore, $x^mX^1=eX^1$. By analogy one can prove that $X^1x^{m}=X^1e$. Therefore, $x^{m}\in H_e$, which contradicts the choice of $m$.
\end{proof}

A semigroup $X$ is called
\begin{itemize}
\item {\em Clifford} if $X=H(X)$;
\item {\em eventually Clifford} if $X=\korin{\infty}{H(X)}$.
\end{itemize}

In fact, the class of (eventially) Clifford semigroups coincides with the class of completely ($\pi$-)regular semigroups, considered in \cite{PR99} (and \cite{BC92},  \cite{Mitro2003}, \cite{PBC}).

Let us recall that a semigroup $X$ is defined to be 
\begin{itemize}
\item ({\em completely})  {\em regular} if for every $x\in X$ there exists $y\in X$ such that $x=xyx$ (and $xy=yx$);
\item ({\em completely}) {\em $\pi$-regular} if for every $x\in X$ there exist $n\in\IN$ and $y\in X$ such that $x^n=x^nyx^n$ (and $x^ny=yx^n$).
\end{itemize}

Each semilattice $X$ carries the {\em natural partial order} $\le$ defined by $x\le y$ off $xy=y=yx$.


Let $\two$ denote the set $\{0,1\}$ endowed with the operation of multiplication inherited from the ring $\IZ$. It is clear that $\two$ is a two-element semilattice, so it carries the natural partial order, which coincides with the linear order inherited from $\IZ$.

For elements $x,y$ of a semigroup $X$ we write $x\lesssim y$ if $\chi(x)\le \chi(y)$ for every homomorphism $\chi:X\to\two$. The relation $\lesssim$ is a quasiorder, called the {\em binary quasiorder} on $X$, see \cite{BH}. The obvious order properties of the semilattice $\two$ imply the following (obvious) properties of the binary quasiorder on $X$.

\begin{proposition}\label{p:quasi2} For any semigroup $X$ and any elements $x,y,a\in X$, the following statements hold:
\begin{enumerate}
\item if $x\lesssim y$, then $ax\lesssim ay$ and $xa\lesssim ya$;
\item $xy\lesssim yx\lesssim xy$;
\item $x\lesssim x^2\lesssim x$;
\item $xy\lesssim x$ and $xy\lesssim y$.
\end{enumerate}
\end{proposition}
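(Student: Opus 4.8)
The plan is to unwind the definition of $\lesssim$ and reduce every one of the four items to an elementary order property of the two-element semilattice $\two$. Fix an arbitrary homomorphism $\chi\colon X\to\two$. Since $\chi$ preserves products, the value of $\chi$ on any of the products occurring in the statement is determined by the multiplication of $\two$, so everything hinges on the following facts about $\two$ with its natural order $\le$ (which here is just $0\le 1$ inherited from $\IZ$): (a) $st\le s$ and $st\le t$ for all $s,t\in\two$; (b) $s\le t$ implies $rs\le rt$ and $sr\le tr$ for all $r,s,t\in\two$; (c) $st=ts$ for all $s,t\in\two$; (d) $ss=s$ for all $s\in\two$. I would record (a)--(d) once at the start; each is immediate either by inspecting the four products in $\two$, or, more conceptually, by noting that $\two$ is a semilattice so that $\le$ is precisely its semilattice order, for which compatibility with the operation and ``product below each factor'' are standard.

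Granting (a)--(d), each item is a one-line verification. For (1): if $x\lesssim y$ then $\chi(x)\le\chi(y)$, so by (b) $\chi(ax)=\chi(a)\chi(x)\le\chi(a)\chi(y)=\chi(ay)$ and likewise $\chi(xa)\le\chi(ya)$; as $\chi$ was arbitrary, $ax\lesssim ay$ and $xa\lesssim ya$. For (2): by (c), $\chi(xy)=\chi(x)\chi(y)=\chi(y)\chi(x)=\chi(yx)$, so $\chi(xy)\le\chi(yx)$ and $\chi(yx)\le\chi(xy)$ for every $\chi$, that is $xy\lesssim yx\lesssim xy$. For (3): by (d), $\chi(x^2)=\chi(x)^2=\chi(x)$, whence $x\lesssim x^2\lesssim x$. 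For (4): by (a), $\chi(xy)=\chi(x)\chi(y)\le\chi(x)$ and $\chi(xy)\le\chi(y)$, so $xy\lesssim x$ and $xy\lesssim y$.

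There is essentially no obstacle here: the proposition is labelled ``obvious'' precisely because, once the definition of $\lesssim$ is spelled out, it is nothing more than transporting the order arithmetic of $\two$ through the homomorphisms $\chi$. The only point worth isolating cleanly is the short list (a)--(d); I would prove those first (a finite check, or a reference to the semilattice order of $\two$) and then cite them verbatim in each of the four cases rather than re-deriving anything.
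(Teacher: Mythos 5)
Your proof is correct and is exactly the argument the paper has in mind: the paper gives no written proof, simply asserting that the statements follow from "the obvious order properties of the semilattice $\two$", which is precisely your reduction to facts (a)--(d) about $\two$ transported through an arbitrary homomorphism $\chi$. Nothing to add.
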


For an element $a$ of a semigroup $X$ and subset $A\subseteq X$, consider the following sets:
$$
{\Uparrow}a\defeq\{x\in X:a\lesssim x\},\quad {\Downarrow}a\defeq\{x\in X:x\lesssim a\},\quad\mbox{and}\quad{\Updownarrow}a\defeq\{x\in X:a\lesssim x\lesssim a\},
$$
called the {\em upper $\two$-class}, {\em lower $\two$-class} and the {\em $\two$-class} of $x$, respectively. Proposition~\ref{p:quasi2} implies that those three classes are subsemigroups of $X$.

The following simple fact follows from the definition of the class ${\Updownarrow}x$.

\begin{proposition}\label{p:korin} For every idempotent $e$ of a semigroup $X$ we have $\korin{\infty}{H_e}\subseteq{\Updownarrow}e$.
\end{proposition}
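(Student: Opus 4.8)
The plan is to unfold the definition $\korin{\infty}{H_e}=\bigcup_{n\in\IN}\korin{n}{H_e}$, so that it suffices to show: whenever $x\in X$ satisfies $x^n\in H_e$ for some $n\in\IN$, then $e\lesssim x\lesssim e$. All the work will be done with the elementary properties collected in Proposition~\ref{p:quasi2} together with the fact (Corollary 2.2.6 of \cite{Howie}) that $H_e$ is the maximal subgroup of $X$ containing $e$, so $e$ is the identity element of the group $H_e$.

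First I would fix $x$ with $x^n\in H_e$ and prove the intermediate statement $x^n\in{\Updownarrow}e$. Since $e$ is the identity of $H_e$, we have $x^n e=x^n$, and there is an inverse element $y\defeq(x^n)^{-1}\in H_e$ with $x^n y=e$. Applying Proposition~\ref{p:quasi2}(4) to the first equality gives $x^n=x^n e\lesssim e$, and applying it to the second gives $e=x^n y\lesssim x^n$; hence $e\lesssim x^n\lesssim e$, i.e. $x^n\in{\Updownarrow}e$.

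Next I would prove $x\in{\Updownarrow}x^n$. By Proposition~\ref{p:quasi2}(3) we have $x\lesssim x^2$, and multiplying this on the left by $x$ via Proposition~\ref{p:quasi2}(1) and iterating yields $x\lesssim x^k$ for every $k\in\IN$, in particular $x\lesssim x^n$. Conversely, for $n\ge 2$ Proposition~\ref{p:quasi2}(4) gives $x^n=x^{n-1}x\lesssim x$ (and this is trivial for $n=1$). Thus $x\lesssim x^n\lesssim x$. Finally, since $\lesssim$ is a quasiorder (hence transitive), combining $x\lesssim x^n\lesssim e$ and $e\lesssim x^n\lesssim x$ gives $e\lesssim x\lesssim e$, that is, $x\in{\Updownarrow}e$, which completes the proof.

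There is no real obstacle here: the only points requiring a moment of care are the short induction establishing $x\lesssim x^n$ and the observation that $e$ acts as the two-sided identity and admits an inverse inside $H_e$; everything else is formal bookkeeping with the binary quasiorder.
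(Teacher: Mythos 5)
Your proof is correct. The paper gives no argument for this proposition, asserting only that it ``follows from the definition of the class ${\Updownarrow}x$''; your chain $e\lesssim x^n\lesssim e$ together with $x\lesssim x^n\lesssim x$, assembled from Proposition~\ref{p:quasi2} and the group structure of $H_e$, is precisely the routine verification the author leaves to the reader, so there is no divergence to report.
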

 
For two elements $x,y$ of a semigroup $X$, we write $x\Updownarrow y$ iff ${\Updownarrow}x={\Updownarrow}y$ iff $\chi(x)=\chi(y)$ for any homomorphism $\chi:X\to\two$. Proposition~\ref{p:quasi2} implies that $\Updownarrow$ is a congruence on $X$. 

 We recall that a {\em congruence} on a semigroup $X$ is an equivalence relation $\approx$ on $X$ such that for any elements $x\approx y$ of $X$ and any $a\in X$ we have $ax\approx ay$ and $xa\approx ya$. For any congruence $\approx$ on a semigroup $X$, the quotient set $X/_\approx$ has a unique semigroup structure such that the quotient map $X\to X/_\approx$ is a semigroup homomorphism. The semigroup $X/_\approx$ is called the {\em quotient semigroup} of $X$ by the congruence $\approx$~.

A congruence $\approx$ on a semigroup $X$ is called a {\em semilattice congruence} if the quotient semigroup $X/_\approx$ is a semilattice. Proposition~\ref{p:quasi2} implies that $\Updownarrow$ is a semilattice congruence on $X$. Moreover, ${\Updownarrow}$ is equal to the smallest semilattice congruence on $X$, see \cite{BH}, \cite{Petrich63}, \cite{Petrich64}, \cite{Tamura73}. The quotient semigroup $X/_{\Updownarrow}$ is called the {\em semilattice reflexion} of $X$. More information on the smallest semilattice congruence and semilattice decompositions of semigroups can be found in \cite{Putcha73}, \cite{BCP}, \cite{Mitro2003}, \cite{Mitro2004}, \cite{Sulka}.

A semigroup $X$ is called {\em $\two$-trivial} if every homomorphism $h:X\to\two$  is constant. Tamura \cite{Tamura73}, \cite{Tamura82} calls $\two$-trivial semigroups {\em semilattice-indecomposable} (or briefy {\em $s$-indecomposable}) semigroups. The following fundamental fact was first proved by Tamura \cite{Tam56} and then reproved by another methods in \cite{TS66}, \cite{Petrich63}, \cite{Petrich64}, and \cite{BH}.

\begin{theorem}[Tamura]\label{t:Tamura} For every element $x$ of a semigroup $X$ its $\two$-class ${\Updownarrow}x$ is a $\two$-trivial semigroup.
\end{theorem}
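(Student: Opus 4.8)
I will show that every homomorphism $h\colon{\Updownarrow}x\to\two$ is constant; this is exactly the assertion of the theorem. Write $S\defeq{\Updownarrow}x$ and note that, by the definitions recalled above, $S$ coincides with the $\Updownarrow$-congruence class of $x$, and that $\Updownarrow$ is the smallest semilattice congruence on $X$. The plan is to combine this with the classical divisibility description of the smallest semilattice congruence (Tamura, Petrich; see e.g.\ \cite{Tam56}, \cite{Petrich63}, \cite{BH}): $\Updownarrow$ is the transitive closure of the relation $\{(a,b):a\in\korin{\infty}{X^1bX^1}\ \text{and}\ b\in\korin{\infty}{X^1aX^1}\}$. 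Applying this to arbitrary $c,d\in S$ (which do satisfy $c\Updownarrow d$), I get a finite sequence $c=y_0,y_1,\dots,y_n=d$ in $X$ with $y_i\in\korin{\infty}{X^1y_{i+1}X^1}$ and $y_{i+1}\in\korin{\infty}{X^1y_iX^1}$ for all $i<n$; moreover each $y_i$ is $\Updownarrow$-equivalent to $c$ and hence to $x$, so $y_i\in{\Updownarrow}x=S$ for all $i$. Thus it suffices to prove $h(y_i)=h(y_{i+1})$ for each $i<n$.

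Fix one such link and choose $m\in\IN$ and $s,t\in X^1$ with $y_i^{m}=s\,y_{i+1}\,t$. The delicate point is that $s$ and $t$ need not lie in $S$, so $h$ cannot be applied to them directly; the idea is to absorb them into powers of $y_i$. By Proposition~\ref{p:quasi2}(4) we have $y_i^{m}\lesssim s$ and $y_i^{m}\lesssim t$, and since $y_i^{m}\Updownarrow x$ (because $y_i\in S$) this gives $x\lesssim s$ and $x\lesssim t$. Put $s'\defeq y_i^{m}s$ and $t'\defeq t\,y_i^{m}$. Then $s'\lesssim y_i^{m}\lesssim x$ and $t'\lesssim y_i^{m}\lesssim x$ by Proposition~\ref{p:quasi2}(4); on the other hand, Proposition~\ref{p:quasi2}(1) applied to $x\lesssim s$ and $x\lesssim t$ yields $y_i^{m}x\lesssim s'$ and $x\,y_i^{m}\lesssim t'$, and since $y_i^{m}\Updownarrow x$ we have $y_i^{m}x\Updownarrow x^2\Updownarrow x$ and $x\,y_i^{m}\Updownarrow x^2\Updownarrow x$ by Proposition~\ref{p:quasi2}(3), so $x\lesssim s'$ and $x\lesssim t'$. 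Hence $s'\Updownarrow x\Updownarrow t'$, i.e.\ $s',t'\in S$. Now
$$y_i^{3m}=y_i^{m}\,(s\,y_{i+1}\,t)\,y_i^{m}=(y_i^{m}s)\,y_{i+1}\,(t\,y_i^{m})=s'\,y_{i+1}\,t',$$
a product of elements of the subsemigroup $S$. Since $\two$ consists of idempotents, $h(y_i)=h(y_i)^{3m}=h(y_i^{3m})=h(s')\,h(y_{i+1})\,h(t')\le h(y_{i+1})$. Running the same computation with the relation $y_{i+1}\in\korin{\infty}{X^1y_iX^1}$ in place of $y_i\in\korin{\infty}{X^1y_{i+1}X^1}$ gives $h(y_{i+1})\le h(y_i)$, so $h(y_i)=h(y_{i+1})$.

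Chaining these equalities along the sequence gives $h(c)=h(y_0)=\dots=h(y_n)=h(d)$; as $c,d\in S$ were arbitrary, $h$ is constant, so ${\Updownarrow}x$ is $\two$-trivial. I expect the genuinely delicate point to be exactly the one highlighted above: the divisibility witnesses $s,t$ in $y_i^{m}=s\,y_{i+1}\,t$ typically leave the class $S$, so one must re-enter $S$ before $h$ becomes usable, and multiplying by $y_i^{m}$ — which is $\Updownarrow$-equivalent to $x$ by Proposition~\ref{p:quasi2} — is what accomplishes this. A secondary, smaller point is that one should feed in the divisibility presentation of $\Updownarrow$ rather than its presentation by the relations $a^2\Updownarrow a$ and $ab\Updownarrow ba$, for which the analogous absorption step is not transparent.
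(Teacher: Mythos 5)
The paper itself offers no proof of Theorem~\ref{t:Tamura}: it is quoted as a classical fact, with \cite{Tam56}, \cite{TS66}, \cite{Petrich63}, \cite{Petrich64} and \cite{BH} cited for proofs, so your proposal has to be judged against those sources and against what the paper actually makes available. The second half of your argument --- absorbing the witnesses $s,t\in X^1$ of $y_i^{m}=sy_{i+1}t$ into $s'=y_i^{m}s$, $t'=ty_i^{m}$, and checking via Proposition~\ref{p:quasi2} that $s',t'\in{\Updownarrow}x$ so that $h$ may be applied --- is correct, and it is indeed the key manoeuvre in proofs of this theorem. The genuine gap is in the first half: everything rests on the claim that $\Updownarrow$ is the transitive closure of the \emph{symmetric} relation $\sigma=\{(a,b):a\in\korin{\infty}{X^1bX^1}\ \text{and}\ b\in\korin{\infty}{X^1aX^1}\}$, and this is not something you can simply cite. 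It is not Proposition~\ref{p:Upclass}, whose iteration builds ${\Uparrow}x$ from products $uv$ of \emph{pairs} of previously obtained elements rather than from chains of single divisibility links; and it is not Tamura's divisibility description of the least semilattice congruence either, which asserts that $a\Updownarrow b$ iff $a$ and $b$ are \emph{mutually reachable} under the transitive closure of the one-way relation $a\to b\Leftrightarrow b\in\korin{\infty}{X^1aX^1}$. The relation you use, the transitive closure of $\to\cap\to^{-1}$, is a priori strictly smaller than $\to^{*}\cap(\to^{*})^{-1}$, and their coincidence is neither obvious nor what the cited sources prove. Establishing your description amounts to showing that the transitive closure of $\sigma$ is a semilattice congruence, which is essentially the content of the theorem you are proving; as written, the argument is circular at its foundation.

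The gap is repairable without touching your absorption step, but you must feed in the correct form of the divisibility description. From $c\Updownarrow d$, Tamura's mutual-reachability statement gives a one-way chain $c=z_0\to z_1\to\dots\to z_k=d$ with $z_{i+1}^{n_i}\in X^1z_iX^1$. Each link yields $z_{i+1}\lesssim z_i$ by Proposition~\ref{p:quasi2}(3,4), so $d\lesssim z_i\lesssim c$ for every $i$; since $c\Updownarrow d$, all the $z_i$ lie in $S={\Updownarrow}x$. This sandwiching is what keeps the chain inside $S$ --- you do not need consecutive elements to be mutually divisible. Your absorption computation applied to $z_{i+1}^{n_i}=sz_it$ then gives $h(z_{i+1})=h(z_{i+1}^{3n_i})=h(s')h(z_i)h(t')\le h(z_i)$, whence $h(d)\le h(c)$, and the chain witnessing reachability of $c$ from $d$ gives the reverse inequality. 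Note that even this repair imports a theorem from the literature that the paper does not reproduce; a self-contained argument from Proposition~\ref{p:Upclass} alone would need additional care, because the elements $u,v\in{\Uparrow}_{\!n}x$ occurring there are only known to lie in ${\Uparrow}x$, not in ${\Updownarrow}x$.
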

 
The binary quasiorder admits an inner description via prime (co)ideals, which was first noticed by Petrich \cite{Petrich64} and Tamura \cite{Tamura73}.

A subset $I$ of a semigroup $X$ is called
\begin{itemize}
\item an {\em ideal} in $X$ if $(IX)\cup (XI)\subseteq I$;
\item a {\em prime ideal} if $I$ is an ideal such that $X\setminus I$ is a subsemigroup of $X$;
\item a ({\em prime}) {\em coideal} if the complement $X\setminus I$ is a (prime) ideal in $X$.
\end{itemize}
According to this definition, the sets $\emptyset$ and $X$ are prime (co)ideals in $X$.

Observe that a subset $A$ of a semigroup $X$ is a prime coideal in $X$ if and only if its {\em characteristic function} $$\chi_A:X\to\two,\quad \chi_A:x\mapsto\chi_A(x)\defeq\begin{cases}1&\mbox{if $x\in A$},\\
0&\mbox{otherwise},
\end{cases}
$$ is a homomorphism.  This function characterization of prime coideals implies the following inner description of the $\two$-quasiorder, first noticed by Tamura in \cite{Tamura73}.

\begin{proposition}\label{p:smallest-pi} For any element $x$ of a semigroup $X$, its upper $\two$-class ${\Uparrow}x$ coincides with the smallest coideal of $X$ that contains $x$. 
\end{proposition}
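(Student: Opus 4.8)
The plan is to show that the set $\Uparrow x$ is itself a prime coideal containing $x$, and that it is contained in \emph{every} prime coideal containing $x$; together these two facts say exactly that $\Uparrow x$ is the smallest such coideal. (It is primeness of the coideal that will matter: it is precisely that hypothesis which, by the characterization of prime coideals recalled just before the statement, turns a characteristic function into a homomorphism to $\two$.)

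First I would note that $x\in{\Uparrow}x$, since $\lesssim$ is reflexive ($\chi(x)\le\chi(x)$ for every homomorphism $\chi:X\to\two$). Next, $\Uparrow x$ is a subsemigroup of $X$ — this was recorded right after the definition of $\Uparrow a$ and follows from Proposition~\ref{p:quasi2}(1) — so to see that $\Uparrow x$ is a prime coideal it remains only to check that its complement $X\setminus{\Uparrow}x$ is an ideal. To that end, take $y\in X\setminus{\Uparrow}x$, i.e. $x\not\lesssim y$, and let $a\in X$. If $x\lesssim ay$, then combining with $ay\lesssim y$ from Proposition~\ref{p:quasi2}(4) and using transitivity of the quasiorder $\lesssim$ we would get $x\lesssim y$, a contradiction; hence $ay\in X\setminus{\Uparrow}x$, and symmetrically $ya\in X\setminus{\Uparrow}x$. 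Thus $X\setminus{\Uparrow}x$ is an ideal, so $\Uparrow x$ is a prime coideal containing $x$.

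Finally, let $C$ be an arbitrary prime coideal of $X$ with $x\in C$; I claim ${\Uparrow}x\subseteq C$. Since $C$ is a prime coideal, its characteristic function $\chi_C:X\to\two$ is a homomorphism. Given any $y\in{\Uparrow}x$ we have $x\lesssim y$, hence $\chi_C(x)\le\chi_C(y)$ by the definition of $\lesssim$; as $x\in C$ forces $\chi_C(x)=1$, we conclude $\chi_C(y)=1$, that is, $y\in C$. Therefore ${\Uparrow}x\subseteq C$, and combined with the previous paragraph this exhibits $\Uparrow x$ as the smallest prime coideal of $X$ containing $x$. I do not expect any real obstacle: all the substantive input (that each set $\Uparrow a$ is a subsemigroup, and that prime coideals are exactly the preimages of $\{1\}$ under homomorphisms $X\to\two$) is already available in the excerpt, and the proof is just an assembly of these facts together with Proposition~\ref{p:quasi2}(4) and transitivity of $\lesssim$.
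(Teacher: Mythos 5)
Your proof is correct and is exactly the argument the paper leaves implicit (it states the proposition as a consequence of the characteristic-function description of prime coideals and gives no further details): primeness of ${\Uparrow}x$ via Proposition~\ref{p:quasi2}, and minimality via the homomorphism $\chi_C:X\to\two$. Note only that you have rightly read ``smallest coideal'' as ``smallest \emph{prime} coideal'' --- which is how the paper itself invokes this proposition later, e.g.\ in Proposition~\ref{p:e-viable} --- since the unqualified version fails for $\two$-trivial semigroups with proper nonempty ideals (Remark~\ref{r:Bard}).
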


\begin{corollary} A semigroup $X$ is $\two$-trivial if and only if every nonempty prime ideal in $X$ coincides with $X$.
\end{corollary}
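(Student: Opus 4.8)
The plan is to run everything through the correspondence between homomorphisms $X\to\two$ and prime coideals of $X$ that was just recorded: every function $h\colon X\to\two$ equals $\chi_{h^{-1}(1)}$, and $h$ is a homomorphism if and only if $h^{-1}(1)$ is a prime coideal of $X$. Combining this with the defining duality ``$A$ is a prime coideal $\iff$ $X\setminus A$ is a prime ideal'' will turn the statement into a chain of tautological reformulations.

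First I would unpack the definitions. A homomorphism $h\colon X\to\two$ is constant precisely when its image is $\{0\}$ or $\{1\}$, i.e. precisely when $h^{-1}(1)\in\{\emptyset,X\}$. So $X$ is $\two$-trivial if and only if $h^{-1}(1)\in\{\emptyset,X\}$ for every homomorphism $h\colon X\to\two$, which by the function characterization of prime coideals is the same as saying that the only prime coideals of $X$ are $\emptyset$ and $X$. Passing to complements via the duality above, this is in turn equivalent to: the only prime ideals of $X$ are $X$ and $\emptyset$. Since $\emptyset$ is always a prime ideal and is the unique prime ideal distinct from $X$ among those, this last condition says exactly that every nonempty prime ideal of $X$ coincides with $X$.

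Concretely, for the forward implication I would take a nonempty prime ideal $I$, note that $\chi_{X\setminus I}\colon X\to\two$ is then a homomorphism, hence constant by $\two$-triviality, and evaluate it at a point of $I$ to conclude $\chi_{X\setminus I}\equiv 0$, i.e. $I=X$. For the converse I would take a homomorphism $h\colon X\to\two$, observe that $h^{-1}(0)=X\setminus h^{-1}(1)$ is a prime ideal, and split into the cases $h^{-1}(0)=\emptyset$ (so $h\equiv 1$) and $h^{-1}(0)\neq\emptyset$ (so $h^{-1}(0)=X$ by hypothesis, whence $h\equiv 0$); in both cases $h$ is constant. Since every step is a direct translation along already-established equivalences, I do not anticipate any genuine obstacle; the only points needing a word of care are the trivial remarks that $\emptyset$ and $X$ are prime ideals (noted after the definition) and that a constant map into $\two$ has fiber over $1$ equal to $\emptyset$ or $X$.
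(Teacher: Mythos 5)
Your argument is correct and is exactly the intended one: the paper states this corollary without proof because it follows immediately from the observation that $A\subseteq X$ is a prime coideal if and only if $\chi_A:X\to\two$ is a homomorphism, which is precisely the correspondence you exploit. Nothing to add.
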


\begin{remark}\label{r:Bard} By \cite{ACMU}, \cite{CM} (see also \cite{BG1}, \cite{BG2}, \cite{Bard16}, \cite{Bard20}), $\two$-trivial semigroups can contain non-trivial ideals, in particular, there exist infinite congruence-free (and hence $\two$-trivial) monoids with zero.
\end{remark}

The following inner description of the upper $\two$-classes is a modified version of Theorem 3.3 in \cite{Petrich64}. Its proof can be found in \cite{BH}.

\begin{proposition}\label{p:Upclass} For any element $x$ of a semigroup $X$ its upper $\two$-class ${\Uparrow}x$ is equal to the union $\bigcup_{n\in\w}{\Uparrow}_{\!n}x$, where ${\Uparrow}_{\!0}x=\{x\}$ and $${\Uparrow}_{\!n{+}1}x\defeq\{y\in X:X^1yX^1\cap({\Uparrow}_{\!n}x)^2\ne \emptyset\}$$ for $n\in\w$. 
\end{proposition}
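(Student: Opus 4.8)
The plan is to combine Proposition~\ref{p:smallest-pi}, which identifies ${\Uparrow}x$ with the smallest coideal of $X$ containing $x$, with a direct two-sided verification. Write $U\defeq\bigcup_{n\in\w}{\Uparrow}_{\!n}x$. I will establish $U\subseteq{\Uparrow}x$ by induction on $n$, and ${\Uparrow}x\subseteq U$ by checking that $U$ is a coideal containing $x$ and invoking the minimality from Proposition~\ref{p:smallest-pi}.

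For the inclusion $U\subseteq{\Uparrow}x$, I prove ${\Uparrow}_{\!n}x\subseteq{\Uparrow}x$ by induction on $n$. The case $n=0$ is just the reflexivity $x\lesssim x$. Assume ${\Uparrow}_{\!n}x\subseteq{\Uparrow}x$ and take $y\in{\Uparrow}_{\!n+1}x$, so that $ayb=uv$ for some $a,b\in X^1$ and $u,v\in{\Uparrow}_{\!n}x\subseteq{\Uparrow}x$; thus $x\lesssim u$ and $x\lesssim v$. Now I chain the relations of Proposition~\ref{p:quasi2}:
$$x\lesssim x^2\lesssim xv\lesssim uv=ayb\lesssim y,$$
where $x\lesssim x^2$ is item~(3); $x^2\lesssim xv$ and $xv\lesssim uv$ come from item~(1) applied to $x\lesssim v$ (multiplying on the left by $x$) and to $x\lesssim u$ (multiplying on the right by $v$); and $ayb\lesssim y$ follows from item~(4) after splitting into the (at most four) cases according to whether $a$ or $b$ equals the adjoined identity $1$. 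Hence $x\lesssim y$, i.e. $y\in{\Uparrow}x$.

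For the inclusion ${\Uparrow}x\subseteq U$, by Proposition~\ref{p:smallest-pi} it suffices to show that $U$ is a coideal containing $x$. Clearly $x\in{\Uparrow}_{\!0}x\subseteq U$, so the only content is that $X\setminus U$ is an ideal, i.e. that $cd\in U$ implies $c\in U$ and $dc\in U$ implies $c\in U$, for all $c,d\in X$. Suppose $cd\in{\Uparrow}_{\!n}x$ for some $n$. Then $(cd)(cd)\in({\Uparrow}_{\!n}x)^2$, while $(cd)(cd)=c\cdot(dcd)\in cX^1\subseteq X^1cX^1$, so $X^1cX^1\cap({\Uparrow}_{\!n}x)^2\ne\emptyset$ and hence $c\in{\Uparrow}_{\!n+1}x\subseteq U$. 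The case $dc\in U$ is symmetric, using $(dc)(dc)=(dcd)\cdot c\in X^1cX^1$. Thus $U$ is a coideal containing $x$, so ${\Uparrow}x\subseteq U$, and together with the first inclusion this gives ${\Uparrow}x=U=\bigcup_{n\in\w}{\Uparrow}_{\!n}x$.

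I expect the only genuinely delicate point to be the inductive step of the first inclusion, where one must use Proposition~\ref{p:quasi2} with exactly the right orientations — going ``up'' via $x\lesssim x^2$ and back ``down'' via $uv=ayb\lesssim y$ — and keep track of the cases where the multipliers $a,b$ are the formal identity $1$. Everything else is a routine unwinding of the recursive definition of the sets ${\Uparrow}_{\!n}x$.
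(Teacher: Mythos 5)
The paper does not actually prove Proposition~\ref{p:Upclass} (it defers to \cite{BH}), so your argument has to stand on its own. Your first inclusion $\bigcup_{n\in\w}{\Uparrow}_{\!n}x\subseteq{\Uparrow}x$ is correct: the chain through Proposition~\ref{p:quasi2} works (one could shorten it by evaluating an arbitrary homomorphism $\chi:X\to\two$ with $\chi(x)=1$ directly). The gap is in the reverse inclusion. You reduce it to showing that $U\defeq\bigcup_{n\in\w}{\Uparrow}_{\!n}x$ is a coideal containing $x$ and declare this ``the only content''. It is not: ${\Uparrow}x$ is minimal among \emph{prime} coideals containing $x$ (equivalently, among sets whose characteristic function is a homomorphism to $\two$), not among all coideals. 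The word ``prime'' has evidently dropped out of the statement of Proposition~\ref{p:smallest-pi}; read literally, that proposition is false. For instance, in the additive semigroup $\IN$ the singleton $\{1\}$ is a coideal containing $1$, since its complement $\{n\in\IN:n\ge 2\}$ is an ideal, yet ${\Uparrow}1=\IN$ because every homomorphism $\IN\to\two$ is constant. Note that whenever the paper itself invokes this minimality (in the proofs of Proposition~\ref{p:PW} and of Claim~\ref{cl:Up}), it verifies both the coideal property \emph{and} the subsemigroup property. So a coideal containing $x$ can be strictly smaller than ${\Uparrow}x$, and your reduction does not yield ${\Uparrow}x\subseteq U$.

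The repair is short. First observe that the sets ${\Uparrow}_{\!n}x$ increase with $n$: if $y\in{\Uparrow}_{\!n}x$, then $yy\in X^1yX^1\cap({\Uparrow}_{\!n}x)^2$, so $y\in{\Uparrow}_{\!n+1}x$. Consequently, for $u\in{\Uparrow}_{\!m}x$ and $v\in{\Uparrow}_{\!n}x$ with $k=\max(m,n)$ we have $uv\in X^1(uv)X^1\cap({\Uparrow}_{\!k}x)^2$, hence $uv\in{\Uparrow}_{\!k+1}x\subseteq U$, and $U$ is a subsemigroup. Together with your (correct) verification that the complement of $U$ is an ideal, this makes $U$ a prime coideal containing $x$, the minimality of ${\Uparrow}x$ among such sets applies, and the two inclusions combine to prove the proposition.
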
  

For duo semigroups, Proposition~\ref{p:Upclass} simplifies to the following form, proved in \cite{BH}. 

\begin{proposition}\label{p:duo} For any element $a\in X$ of a duo semigroup $X$ we have 
$${\Uparrow}a=\{x\in X:a^\IN\cap XxX\ne\emptyset\}.$$
\end{proposition}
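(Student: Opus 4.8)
The plan is to prove the two inclusions of the claimed equality separately, writing $P_a\defeq\{x\in X:a^\IN\cap XxX\ne\emptyset\}$ for the right-hand side.

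For the inclusion $P_a\subseteq{\Uparrow}a$ (which needs no duo hypothesis) I would argue straight from the definition of $\lesssim$. If $x\in P_a$, fix $n\in\IN$ and $u,v\in X$ with $a^n=uxv$. For any homomorphism $\chi:X\to\two$ with $\chi(a)=1$ we then get $1=\chi(a)^n=\chi(a^n)=\chi(u)\chi(x)\chi(v)$, forcing $\chi(x)=1$; and if $\chi(a)=0$ the inequality $\chi(a)\le\chi(x)$ is automatic. Hence $a\lesssim x$, i.e. $x\in{\Uparrow}a$.

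For the reverse inclusion ${\Uparrow}a\subseteq P_a$ I would show that $P_a$ is a prime coideal of $X$ containing $a$, and then invoke Proposition~\ref{p:smallest-pi}, which exhibits ${\Uparrow}a$ as the smallest (prime) coideal through $a$. That $a\in P_a$ is clear, since $a^3=a\cdot a\cdot a\in XaX$. That $P_a$ is a coideal is easy and duo-free: if $xy\in P_a$, choose $n$ and $p,q\in X$ with $a^n=p(xy)q$; then $a^n=(px)\,y\,q\in XyX$ and $a^n=p\,x\,(yq)\in XxX$, so $x,y\in P_a$. The place where the duo hypothesis enters, and the main obstacle, is showing that $P_a$ is a \emph{subsemigroup}: given $x,y\in P_a$, pick $k,l\in\IN$ and $s,t,s',t'\in X$ with $a^k=sxt$ and $a^l=s'yt'$, so that $a^{k+l}=s\,x\,(ts')\,y\,t'$; since $X$ is duo we have $(ts')y\in Xy=yX$, say $(ts')y=yw$ with $w\in X$, and therefore $a^{k+l}=s\,x\,y\,w\,t'=s(xy)(wt')\in X(xy)X$, which gives $xy\in P_a$. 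Combining the two inclusions yields the equality.

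The only real content is thus the relocation trick in the last step: the two ``interior'' occurrences of $x$ and $y$ in a word $sxts'yt'$ representing a power of $a$ have to be merged into a single adjacent factor $xy$, and the identity $Xy=yX$ is exactly what makes this possible — without it there is no reason for one single power of $a$ to land in $X(xy)X$. If one prefers not to appeal to the prime-coideal description, the same computation can instead be organized as an induction along the filtration ${\Uparrow}a=\bigcup_{n\in\w}{\Uparrow}_{\!n}a$ of Proposition~\ref{p:Upclass}: the base case ${\Uparrow}_{\!0}a=\{a\}\subseteq P_a$ is immediate, and in the inductive step one applies the merge above to a factorization $uyv=pq$ with $p,q\in{\Uparrow}_{\!n}a\subseteq P_a$ to conclude $y\in P_a$.
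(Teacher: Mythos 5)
Your proof is correct: both inclusions are argued soundly, and the duo identity $Xy=yX$ is used exactly where it is needed (to merge $sxts'yt'$ into $s(xy)(wt')$). The paper itself only cites \cite{BH} for this proposition, but your strategy --- show the right-hand side is a prime coideal containing $a$ and invoke the minimality of ${\Uparrow}a$ from Proposition~\ref{p:smallest-pi} --- is precisely the template the paper uses for the analogous Proposition~\ref{p:PW}, so this is essentially the intended argument.
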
  

A semigroup $X$ is called {\em Archimedean} if for any elements $x,y\in X$ there exists $n\in\IN$ such that $x^n\in XyX$ for some $a,b\in X$. A standard example of an Archimedean semigroup is the additive semigroup $\IN$ of positive integers. For commutative semigroups the following characterization (that can be easily derived from Proposition~\ref{p:duo}) was obtained by Tamura and Kimura in \cite{TK54}. 

\begin{theorem}\label{t:Archimed} A duo semigroup $X$ is $\two$-trivial if and only if $X$ is Archimedean.
\end{theorem}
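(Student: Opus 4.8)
The plan is to prove both implications by leveraging the inner description of upper $\two$-classes from Proposition~\ref{p:duo}, which for duo semigroups gives ${\Uparrow}a=\{x\in X:a^\IN\cap XxX\ne\emptyset\}$. Recall that a semigroup $X$ is $\two$-trivial precisely when every homomorphism $X\to\two$ is constant, which (by Theorem~\ref{t:Tamura} and the remarks preceding it, or directly from the definition of $\Updownarrow$ being the smallest semilattice congruence) is equivalent to ${\Uparrow}x=X$ for every $x\in X$; indeed if $\two$-triviality holds then $\chi(x)\le\chi(y)$ for all homomorphisms $\chi:X\to\two$ and all $x,y$, so $x\lesssim y$ for all pairs, whence ${\Uparrow}x=X$, and conversely ${\Uparrow}x=X$ for all $x$ forces $x\Updownarrow y$ for all $x,y$, making $X/_{\Updownarrow}$ a one-point semilattice. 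So the task reduces to showing: $X$ is Archimedean $\iff$ ${\Uparrow}x=X$ for every $x\in X$.

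For the direction ``Archimedean $\Rightarrow$ $\two$-trivial'': fix $x,y\in X$. By the definition of Archimedean there is $n\in\IN$ with $x^n\in XyX$ (the statement in the excerpt has a small typo but this is the intended meaning), so $x^\IN\cap XyX\ne\emptyset$. By Proposition~\ref{p:duo} this says exactly $y\in{\Uparrow}x$. Since $x,y$ were arbitrary, ${\Uparrow}x=X$ for every $x$, hence $X$ is $\two$-trivial.

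For the converse ``$\two$-trivial $\Rightarrow$ Archimedean'': assume ${\Uparrow}x=X$ for every $x\in X$, and fix arbitrary $x,y\in X$. Then $y\in X={\Uparrow}x$, so by Proposition~\ref{p:duo} we get $x^\IN\cap XyX\ne\emptyset$, i.e. $x^n\in XyX$ for some $n\in\IN$. This is precisely the Archimedean condition, so $X$ is Archimedean.

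I do not expect a serious obstacle here, since almost all the content is packaged into Proposition~\ref{p:duo} and Tamura's characterization of $\two$-triviality; the only care needed is in cleanly recording the equivalence ``$\two$-trivial $\iff$ ${\Uparrow}x=X$ for all $x$'', and in matching the (slightly garbled) definition of Archimedean in the text with the usable form $x^n\in XyX$. One should also note that $X$ being duo is used only through Proposition~\ref{p:duo} and is not needed for the Archimedean-to-$\two$-trivial direction, though stating it as an ``if and only if'' within the duo hypothesis is harmless. If one wanted to avoid citing the characterization of $\two$-triviality as a black box, the argument above already exhibits it inline: $\two$-triviality of $X$ is literally the statement that every $\chi:X\to\two$ is constant, which holds iff $x\lesssim y$ for all $x,y$, which by definition of ${\Uparrow}$ is ${\Uparrow}x=X$ for all $x$.
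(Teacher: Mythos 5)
Your proof is correct and follows exactly the route the paper indicates (the paper gives no explicit proof, merely remarking that the theorem ``can be easily derived from Proposition~\ref{p:duo}''): you translate $\two$-triviality into ``${\Uparrow}x=X$ for all $x$'' and the Archimedean condition into ``$x^\IN\cap XyX\ne\emptyset$ for all $x,y$'', and match them via Proposition~\ref{p:duo}. Your side observations --- that the definition of Archimedean in the text carries a vestigial ``for some $a,b\in X$'', and that the duo hypothesis is only needed for the $\two$-trivial $\Rightarrow$ Archimedean direction --- are both accurate.
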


For viable semigroups we have another simplification of Proposition~\ref{p:Upclass} due to Putcha and Weissglass \cite{PW}. Let us recall that a semigroup $X$ is called {\em viable} if for any $x,y\in X$ with $\{xy,yx\}\subseteq E(X)$ we have $xy=yx$. 

\begin{proposition}[Putcha--Weissglass]\label{p:PW} If $X$ is a viable semigroup, then for every idempotent $e\in E(X)$ we have ${\Uparrow}e=\{x\in X:e\in X^1xX^1\}$.
\end{proposition}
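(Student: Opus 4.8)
The plan is to prove the two inclusions ${\Uparrow}e\supseteq\{x\in X:e\in X^1xX^1\}$ and ${\Uparrow}e\subseteq\{x\in X:e\in X^1xX^1\}$ separately; the first holds in every semigroup, and the second is where viability will be used.

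\emph{Showing $\{x\in X:e\in X^1xX^1\}\subseteq{\Uparrow}e$.} Let $e=axb$ with $a,b\in X^1$. Extending each homomorphism $\chi\colon X\to\two$ to a homomorphism $X^1\to\two$ by $\chi(1)\defeq1$, we get $\chi(e)=\chi(a)\chi(x)\chi(b)\le\chi(x)$, since in the semilattice $\two$ multiplying $\chi(x)$ by other elements cannot increase it. Hence $e\lesssim x$, i.e. $x\in{\Uparrow}e$. This uses neither viability nor idempotency of $e$ (it is essentially Proposition~\ref{p:quasi2}(4)).

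\emph{Showing ${\Uparrow}e\subseteq\{x\in X:e\in X^1xX^1\}$.} Put $N\defeq\{y\in X:e\in X^1yX^1\}$. The complement $X\setminus N$ is an ideal of $X$, because $X^1(ay)X^1\cup X^1(ya)X^1\subseteq X^1yX^1$ for all $a\in X$; so $N$ is a coideal containing $e$. The plan is then to prove that, when $X$ is viable, $N$ is in addition a \emph{subsemigroup}, hence a prime coideal; its characteristic function $\chi_N\colon X\to\two$ is then a homomorphism with $\chi_N(e)=1$, so that for every $x\notin N$ we get $\chi_N(x)=0<\chi_N(e)$, whence $e\not\lesssim x$ and $x\notin{\Uparrow}e$. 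This yields ${\Uparrow}e\subseteq N$, as required. (One could instead establish ${\Uparrow}e\subseteq N$ by induction along the formula ${\Uparrow}e=\bigcup_{n\in\w}{\Uparrow}_{\!n}e$ of Proposition~\ref{p:Upclass}, but the inductive step reduces to exactly the same closure property of $N$, so this route does not avoid the difficulty.)

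Thus everything comes down to the following claim, which I expect to be the main obstacle: \emph{if $X$ is viable, $e\in E(X)$ and $e\in X^1aX^1\cap X^1bX^1$, then $e\in X^1abX^1$.} To attack it I would first normalise the witnesses using idempotency. Writing $e=u_1av_1$ and $e=u_2bv_2$ and using $e=e^2$ we get $e=u_1a(v_1u_2)bv_2$, i.e. $e=paqbr$ for some $p,q,r\in X^1$; replacing $p,r$ by $ep,re$ (allowed since $e=e^3$) we may assume $ep=p$ and $re=r$. A routine computation using only these relations and $e^2=e$ then shows that each cyclic rotation $aqbrp,\ qbrpa,\ brpaq,\ rpaqb$ of the string $p\,a\,q\,b\,r$ is an idempotent of $X$, and that $e\in X^1zX^1$ for each such rotation $z$ (for instance $e=e^2=(pa)(qbrpa)(qbr)$). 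In particular $z_1\defeq qbrpa$ and $z_2\defeq brpaq$ are idempotents and $z_1z_2=(qbrp)(ab)(rpaq)\in X^1abX^1$. The delicate final step is to pass from $z_1,z_2$ back to $ab$, i.e. to ``slide the middle factor $q$ past $b$''; this is where viability is used — one applies the hypothesis to a suitably chosen pair of idempotents whose two products are both idempotent, so that these idempotents commute, and reads off the required expression $e\in X^1abX^1$. I expect this last piece of bookkeeping with commuting idempotents to be the hard part; everything preceding it is routine manipulation with the absorption relations $ep=p$, $re=r$ and $e^2=e$.
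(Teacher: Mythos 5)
Your overall strategy --- reduce both inclusions to showing that $N\defeq\{y\in X:e\in X^1yX^1\}$ is a subsemigroup (hence a prime coideal containing $e$) and then invoke minimality of ${\Uparrow}e$ --- is exactly the paper's strategy, and your first inclusion and the coideal property of $N$ are fine. The genuine gap is the step you yourself flag as ``the delicate final step'': it is the \emph{only} place viability enters, you leave it undone, and your sketch of it points in the wrong direction. There is no need to ``slide $q$ past $b$'', and viability should not be applied to ``a pair of idempotents''; it must be applied to a pair of arbitrary elements both of whose products happen to be idempotent. Concretely, with your normalization $e=paqbr$, $ep=p$, $re=r$: take $u=pa$ and $v=qbr$ (both lie in $X$ since $a,b\in X$). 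Then $uv=paqbr=e$ and $vu=qbrpa=z_1$ are both idempotent (your own computation), so viability gives $e=uv=vu=z_1$; that is, $e$ \emph{equals} the rotation $z_1$, which is much stronger than your recorded observation $e\in X^1z_1X^1$. Likewise the pair $u'=paq$, $v'=br$ gives $e=z_2=brpaq$. Then $e=e^2=z_1z_2=(qbrp)(ab)(rpaq)\in X^1abX^1$, and no further commutation or bookkeeping is required.

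With this repair your route is essentially the paper's, just packaged more heavily. The paper never merges the two factorizations: from $e=axb$ it applies viability to the pair $(ax,be)$ (both $axbe=e$ and $beax$ are idempotent) to obtain a factorization $e=beax$ ending in $x$; from $e=cyd$ it symmetrically obtains $e=ydec$ beginning with $y$; multiplying, $e=e^2=(beax)(ydec)=(bea)(xy)(dec)\in X^1xyX^1$. Your cyclic-rotation setup amounts to the same two applications of viability after concatenating the witnesses. But as written the proposal is incomplete at its one essential point, and the hint you give for closing it (commuting idempotents) is not the mechanism that works.
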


\begin{proof} Let ${\Uparrow}_{\!1}e\defeq \{x\in X:e\in X^1xX^1\}$. By Proposition~\ref{p:Upclass}, ${\Uparrow}_{\!1}e\subseteq {\Uparrow}e$. The reverse inclusion will follow from the minimality of the prime coideal ${\Uparrow}e$ as soon as we prove that ${\Uparrow}_{\!1}e$ is a prime coideal in $X$. It is clear from the definition that ${\Uparrow}_{\!1}e$ is a coideal. So, it remains to check that ${\Uparrow}_{\!1}e$ is a subsemigroup. Given any elements $x,y\in {\Uparrow}_{\!1}e$, find elements $a,b,c,d\in X^1$ such that $axb=e=cyd$. Then $axbe=ee=e$ and $(beax)(beax)=be(axbe)ax=beeax=beax$, which means that $beax$ is an idempotent. By the viability of $X$, $axbe=e=beax$. By analogy we can prove that $ecyd=e=ydec$. Then $aeaxydex=ee=e$ and hence $xy\in{\Uparrow}_{\!1}e$.
\end{proof}

Following Tamura \cite{Tamura82}, we define a semigroup $X$ to be {\em unipotent} if $X$ contains a unique idempotent. The following fundamental result was proved by Tamura \cite{Tamura82} and reproved by a different method in \cite{BH}.

\begin{theorem}[Tamura, 1982]\label{t:max-ideal} For the unique idempotent $e$ of an unipotent $\two$-trivial semigroup $X$, the maximal group $H_e$ of $e$ in $X$ is an ideal in $X$. 
\end{theorem}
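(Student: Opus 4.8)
The plan is to show that the maximal subgroup $H_e$ coincides with the minimal two-sided ideal (the kernel) of $X$; since the kernel is an ideal, this proves the theorem.

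First I would observe that every unipotent semigroup is viable: if $x,y\in X$ satisfy $\{xy,yx\}\subseteq E(X)=\{e\}$, then $xy=e=yx$, so $xy=yx$. Hence Proposition~\ref{p:PW} applies to the unique idempotent $e$ and gives ${\Uparrow}e=\{x\in X:e\in X^1xX^1\}$. Since $X$ is $\two$-trivial, every homomorphism $X\to\two$ is constant, so $x\lesssim y$ holds for all $x,y\in X$; in particular ${\Uparrow}e=X$. Comparing the two expressions for ${\Uparrow}e$ gives the key fact
$$e\in X^1xX^1\qquad\text{for every }x\in X,\qquad(\star)$$
that is, the principal ideal generated by any element of $X$ contains $e$.

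Next I would set $J\defeq X^1eX^1$, an ideal of $X$ with $e\in J$, and use $(\star)$ to show that $J$ is the kernel of $X$: for any nonempty ideal $I\subseteq X$ and any $y\in I$ we have $J=X^1eX^1\subseteq X^1(X^1yX^1)X^1=X^1yX^1\subseteq I$, so $J$ is contained in every nonempty ideal. Being a minimal ideal, $J$ is a simple semigroup; moreover $E(J)\subseteq E(X)=\{e\}$, so $e$ is the only, and hence a primitive, idempotent of $J$. By a classical theorem (see \cite{Howie}), a simple semigroup with a primitive idempotent is completely simple, and a completely simple semigroup with a single idempotent is a group (it is the disjoint union of its maximal subgroups, which are indexed by its idempotents). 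Thus $J$ is a group whose identity is its unique idempotent $e$.

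Finally I would identify $J$ with $H_e$: on one hand $J$ is a subgroup of $X$ containing $e$, hence $J\subseteq H_e$; on the other hand every $h\in H_e$ satisfies $h=eh\in eX\subseteq X^1eX^1=J$, hence $H_e\subseteq J$. Therefore $H_e=J$, which is the kernel of $X$ and in particular an ideal. The main point --- and the step needing care --- is the passage from $(\star)$ to the completely simple structure of $X^1eX^1$, where one uses that its unique idempotent is automatically primitive; once this structural picture is available everything else is routine. As a byproduct, $(\star)$ even yields that $e$ is central in $X$, since $xe,ex\in J$ and $e$ is the identity of the group $J$, so that $xe=exe=ex$ for all $x\in X$.
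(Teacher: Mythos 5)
Your proof is correct, but there is nothing in the paper to compare it with: Theorem~\ref{t:max-ideal} is quoted as a known result of Tamura (1982), with an alternative proof attributed to \cite{BH}, and no argument for it appears in the text. Judged on its own, your route is sound and non-circular relative to the paper's logical order: unipotence trivially implies viability, so Proposition~\ref{p:PW} (proved earlier and independently of Theorem~\ref{t:max-ideal}) together with $\two$-triviality gives $e\in X^1xX^1$ for every $x\in X$; from this you correctly conclude that $J\defeq X^1eX^1$ is contained in every nonempty ideal, hence is the kernel, hence is simple; its unique idempotent $e$ is vacuously primitive, so $J$ is completely simple, and a completely simple semigroup with one idempotent is a group by the Rees--Suschkewitsch structure theorem in \cite{Howie}; finally $J\subseteq H_e$ by maximality of $H_e$ and $H_e\subseteq J$ because $h=eh\in X^1eX^1$ for $h\in H_e$. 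The centrality byproduct you note at the end is also correct and recovers Corollary~\ref{c:EZK}. The one thing worth flagging is the cost of the approach: it imports the classification of completely simple semigroups, which is noticeably heavier machinery than the statement requires (Tamura's and \cite{BH}'s arguments are elementary and avoid Rees theory); but every step you take is justified, and the key reduction --- from $\two$-triviality to the containment $e\in X^1xX^1$ via viability --- is exactly the right use of the tools the paper develops before this theorem.
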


An element of a semigroup $X$ is called {\em central} if it belongs to the {\em center}
$$Z(X)\defeq\{z\in X:\forall x\in X\;\;(zx=xz)\}$$of the semigroup $X$.
 
\begin{corollary}\label{c:EZK}The unique idempotent $e$ of a unipotent $\two$-trivial semigroup $X$ is central in $X$.
\end{corollary}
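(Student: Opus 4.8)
The plan is to deduce this directly from Theorem~\ref{t:max-ideal}. That theorem tells us that for the unique idempotent $e$ of the unipotent $\two$-trivial semigroup $X$, the maximal subgroup $H_e$ is an ideal of $X$. I would then recall the standard fact (Corollary 2.2.6 in \cite{Howie}) that $e$ is the identity element of the group $H_e$, so that $eg=g=ge$ for every $g\in H_e$.

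Given an arbitrary $x\in X$, I would first observe that $ex\in H_eX\subseteq H_e$ and $xe\in XH_e\subseteq H_e$, using that $H_e$ is an ideal and $e\in H_e$. Since $ex\in H_e$ and $e$ is the identity of $H_e$, multiplying on the right by $e$ gives $(ex)e=ex$; since $xe\in H_e$, multiplying on the left by $e$ gives $e(xe)=xe$. By associativity, $ex=(ex)e=e(xe)=xe$, so $x$ commutes with $e$. As $x\in X$ was arbitrary, $e\in Z(X)$, i.e. $e$ is central.

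There is essentially no serious obstacle here: the whole content is packaged into Theorem~\ref{t:max-ideal}. The only points requiring a moment of care are that the two-sidedness of the ideal $H_e$ is exactly what places \emph{both} $ex$ and $xe$ inside $H_e$, and that $e$ serves as a two-sided identity on its own $\mathcal H$-class, which is what collapses the product $exe$ to each of $ex$ and $xe$.
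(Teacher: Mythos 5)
Your proof is correct and follows exactly the paper's own argument: invoke Theorem~\ref{t:max-ideal} to get that $H_e$ is an ideal, conclude $ex,xe\in H_e$, and use that $e$ is the identity of the group $H_e$ to obtain $ex=exe=xe$. The extra detail you supply (spelling out the two one-sided identities and associativity) is just an expansion of the paper's one-line computation.
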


\begin{proof} Let $e$ be a unique idempotent of the unipotent semigroup $X$. By Tamura's Theorem~\ref{t:max-ideal}, the maximal subgroup $H_e$ of $e$ is an ideal in $X$. Then for every $x\in X$ we have $xe,ex\in H_e$. Taking into account that $xe$ and $ex$ are elements of the group $H_e$, we conclude that $ex=exe=xe$. This means that the idempotent $e$ is central in $X$. 
\end{proof}

For any idempotent $e$ of a semigroup $X$,
let 
$$\tfrac{H_e}e\defeq\{x\in X:xe=ex\in H_e\}.$$
The set $\frac{H_e}e$ is a subsemigroup of $X$. Indeed, for any $x,y\in\frac{H_e}e$ we have  $xye=xyee=x(ey)e=(xe)(ye)\in H_eH_e=H_e$ and $exy=eexy=e(xe)y=(ex)(ey)\in H_eH_e=H_e$, which means that $xy\in\frac{H_e}e$.

The following theorem nicely complements Theorem~\ref{t:max-ideal} and Corollary~\ref{c:EZK}.

\begin{theorem}\label{t:C-ideal} For any idempotent $e$ we have
$$\korin{\infty}{H_e}\subseteq\tfrac{H_e}e\subseteq{\Uparrow}e.$$
\end{theorem}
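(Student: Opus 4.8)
The plan is to prove the two inclusions separately. For the first inclusion $\korin{\infty}{H_e}\subseteq\frac{H_e}e$, take $x\in\korin{\infty}{H_e}$, so that $x^n\in H_e$ for some $n\in\IN$, and by Lemma~\ref{l:pi-well-defined} we may assume $n\ge 2$. I want to show $xe=ex\in H_e$. Since $x^n\in H_e$ and $H_e$ is the maximal subgroup at $e$, the idempotent $e$ acts as a two-sided identity on $x^n$, i.e. $ex^n=x^n=x^ne$, and moreover $x^n(x^n)^{-1}=e=(x^n)^{-1}x^n$. The natural candidate computations are $xe = x\cdot x^n(x^n)^{-1} = x^{n+1}(x^n)^{-1}$ and $ex = (x^n)^{-1}x^n\cdot x = (x^n)^{-1}x^{n+1}$; I would first check these lie in $H_e$ by showing $x^{n+1}(x^n)^{-1}X^1 = eX^1$ and $X^1 x^{n+1}(x^n)^{-1} = X^1 e$ (using $x^{n+1}\in H_e$, which holds by Lemma~\ref{l:pi-well-defined}), and similarly for $ex$. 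Then to get $xe=ex$, I would argue inside the group $H_e$: since $xe$ and $ex$ are both in $H_e$ and $x^n$ is central-enough to commute with $e$, one can compute $(xe)x^{n-1} = x^n e = x^n = ex^n = x^{n-1}(ex)$ after inserting $e$'s appropriately, and then cancel $x^{n-1}$ — but cancellation needs care, so more robustly I would use $xe\cdot x^{n-1}e = x^n e\cdot$ (suitable correction) and exploit that left multiplication by the group element $x^{n-1}e\in H_e$ (or rather by $(x^{n-1})$ composed with $e$) is injective on $H_e$. Alternatively, and perhaps more cleanly, note $exe$ is an idempotent-adjacent element: $exe\in H_e$ and one shows $xe=exe$ and $ex=exe$ by group cancellation, since $exe = e(xe)$ with $e$ the identity of the group containing $xe$, forcing $exe=xe$, and symmetrically $exe=ex$.

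For the second inclusion $\frac{H_e}e\subseteq{\Uparrow}e$, take $x\in\frac{H_e}e$, so $xe=ex\in H_e$. I must show $e\lesssim x$, i.e. $\chi(e)\le\chi(x)$ for every homomorphism $\chi:X\to\two$. Fix such a $\chi$ and suppose $\chi(e)=1$; I must deduce $\chi(x)=1$. Since $xe\in H_e$, there is an inverse $(xe)^{-1}\in H_e$ with $(xe)(xe)^{-1}=e$. Then $\chi(xe)\cdot\chi((xe)^{-1}) = \chi(e) = 1$ in $\two$, which forces $\chi(xe)=1$, and since $\chi(xe)=\chi(x)\chi(e)=\chi(x)\cdot 1=\chi(x)$, we get $\chi(x)=1$, as desired. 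This direction is short.

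I expect the main obstacle to be the first inclusion, specifically proving the commutativity $xe=ex$ for $x\in\korin{\infty}{H_e}$ rigorously: showing each of $xe$, $ex$ lies in $H_e$ requires the same two-sided $\mathcal H$-class bookkeeping as in the proof of Lemma~\ref{l:pi-well-defined} (sandwiching $x^{n+1}(x^n)^{-1}X^1$ between $eX^1$ and $x^{n+1}X^1=eX^1$, and likewise on the left), and then deducing $xe=ex=exe$ cleanly demands invoking that $e$ is the identity element of the group $H_e$ and that this group's multiplication cancels. The second inclusion is essentially immediate once one observes that a homomorphism to $\two$ sends any element of a subgroup $H_e$ with $\chi(e)=1$ to $1$.
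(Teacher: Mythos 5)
Your proposal is correct, and for the first inclusion it is essentially the paper's argument: show $xe,ex\in H_e$ and then deduce $xe=exe=ex$ from the fact that $e$ is the identity of the group $H_e$ (the paper does exactly this, after establishing $xeX^1=eX^1$ and $X^1xe=X^1e$ by the same sandwiching you describe). In fact your observation that $xe=x\cdot x^n(x^n)^{-1}=x^{n+1}(x^n)^{-1}$ gives a shortcut you did not fully exploit: since $x^{n+1}\in H_e$ by Lemma~\ref{l:pi-well-defined} and $(x^n)^{-1}\in H_e$, you get $xe\in H_eH_e=H_e$ immediately because $H_e$ is a subgroup, with no $X^1$-bookkeeping needed; similarly $ex=(x^n)^{-1}x^{n+1}\in H_e$. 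Your hesitations about cancelling $x^{n-1}$ are unnecessary — the clean version you settle on ($exe=e(xe)=xe$ and $exe=(ex)e=ex$) is complete and rigorous. For the second inclusion the paper routes through Proposition~\ref{p:Upclass}, observing $e=(xe)g\in xH_e\subseteq X^1xX^1$ for a suitable $g\in H_e$, whereas you argue directly from the definition of $\lesssim$: any homomorphism $\chi:X\to\two$ with $\chi(e)=1$ must satisfy $\chi(xe)=1$ (since $\chi(xe)\chi((xe)^{-1})=\chi(e)=1$) and hence $\chi(x)=\chi(x)\chi(e)=\chi(xe)=1$. Your version is slightly more elementary in that it avoids the inner description of ${\Uparrow}e$; the paper's version has the advantage of exhibiting the stronger containment $\tfrac{H_e}e\subseteq\{x\in X:e\in X^1xX^1\}$, which is the form reused elsewhere (e.g.\ in Proposition~\ref{p:PW}). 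Both are valid; no gaps.
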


\begin{proof} Take any element $x\in \korin{\infty}{H_e}$. Since $x\in\korin{\infty}{H_e}$, there exists $n\in\IN$ such that $x^n\in H_e$ and hence $x^{2n}\in H_e$. Observe that $xeX^1=xx^{n}X^1\subseteq x^nX^1=eX^1$ and $eX^1=x^{2n}X^1\subseteq x^{n+1}X^1=xeX^1$ and hence $xeX^1=eX^1$. By analogy we can prove that $X^1xe=X^1e$. Then $xe\in H_e$ by the definition of the $\mathcal H$-class $H_e$.

By analogy we can prove that $ex\in H_e$. It follows from $xe,ex\in H_e$ that $ex=exe=ex\in H_e$ and hence $x\in\frac{H_e}e$.

By Proposition~\ref{p:Upclass},
$$\tfrac{H_e}e\subseteq\{x\in X:e\in xH_e\cap xH_e\}\subseteq\{x\in X:e\in X^1xX^1\}\subseteq{\Uparrow}e.$$
\end{proof} 

An idempotent $e$ of a semigroup $X$ is defined to be {\em viable} if the semigroup $\tfrac{H_e}e$ is a coideal in $X$.

\begin{proposition}\label{p:e-viable} An idempotent $e$ of a semigroup $X$ is viable if and only if $\frac{H_e}e={\Uparrow}e$. In this case $H_e$ is an ideal of the semigroup ${\Uparrow}e$ and $e\in Z({\Uparrow}e)$.
\end{proposition}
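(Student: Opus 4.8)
The plan is to reduce the equivalence entirely to two facts already at our disposal: the inclusion $\frac{H_e}e\subseteq{\Uparrow}e$ from Theorem~\ref{t:C-ideal}, and the description of ${\Uparrow}e$ as the smallest coideal of $X$ containing $e$ from Proposition~\ref{p:smallest-pi}. I would first record the trivial observations that $e\in\frac{H_e}e$ (because $ee=ee=e\in H_e$) and $e\in{\Uparrow}e$, so both sets of the claimed equality are coideal-candidates through $e$.

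For the ``if'' direction I would argue that if $\frac{H_e}e={\Uparrow}e$, then $\frac{H_e}e$ is a coideal (being equal to the coideal ${\Uparrow}e$), and hence $e$ is viable by definition. For the converse, assuming $e$ is viable, the set $\frac{H_e}e$ is a coideal containing $e$, so the minimality asserted in Proposition~\ref{p:smallest-pi} gives ${\Uparrow}e\subseteq\frac{H_e}e$; together with Theorem~\ref{t:C-ideal} this forces $\frac{H_e}e={\Uparrow}e$.

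For the two additional assertions I would work under the standing assumption $\frac{H_e}e={\Uparrow}e$. Fixing $x\in{\Uparrow}e=\frac{H_e}e$ and $h\in H_e$, I have $xe=ex\in H_e$ by the definition of $\frac{H_e}e$, and $eh=he=h$ because $H_e$ is the maximal subgroup of $X$ with identity $e$; hence $xh=(xe)h\in H_eH_e=H_e$ and $hx=h(ex)\in H_eH_e=H_e$, which shows that $H_e$ is an ideal of the semigroup ${\Uparrow}e$. Centrality of $e$ is then immediate, since every $x\in{\Uparrow}e=\frac{H_e}e$ satisfies $xe=ex$ by definition, so $e\in Z({\Uparrow}e)$.

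I do not expect a genuine obstacle in this argument; the only delicate points are invoking Proposition~\ref{p:smallest-pi} with the paper's notion of coideal (complement is an ideal, not necessarily prime) so that the minimality indeed applies, and using that $H_e$ is closed under the ambient multiplication with two-sided identity $e$, which is exactly what makes the one-line computations $xh=(xe)h$ and $hx=h(ex)$ land back inside $H_e$.
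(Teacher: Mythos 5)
Your proposal is correct and follows essentially the same route as the paper: both directions rest on the minimality of ${\Uparrow}e$ from Proposition~\ref{p:smallest-pi} together with the inclusion $\frac{H_e}e\subseteq{\Uparrow}e$ from Theorem~\ref{t:C-ideal}, and the ideal/centrality claims are read off from the definition of $\frac{H_e}e$ (your explicit computations $xh=(xe)h$ and $hx=h(ex)$ merely spell out what the paper leaves as ``by the definition of $\frac{H_e}e$'').
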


\begin{proof} If $e$ is viable, then semigroup $\frac{H_e}e$ is a prime coideal in $X$ and hence ${\Uparrow}e\subseteq\frac{H_e}e$ as  ${\Uparrow}e$ is the smallest prime coideal containing $e$, see Proposition~\ref{p:smallest-pi}. Then  $\frac{H_e}e={\Uparrow}e$ by Theorem~\ref{t:C-ideal}.

If $\frac{H_e}e={\Uparrow}e$, then $e$ is viable because ${\Uparrow}e=\frac{H_e}e$ is a coideal in $X$.

Also $H_e$ is an ideal in $\frac{H_e}e$ and $e\in Z(\frac{H_e}e)$ by the definition of $\frac{H_e}e$.
\end{proof}

\section{Characterizing $E$-separated semigroups}

In this section we find several commutativity properties of semigroups, which are equivalent to the $E$-separatedness.

\begin{definition}
A semigroup $X$ is defined to be 
\begin{itemize}
\item {\em $E$-commutative} if $xy=yx$ for any idempotents $x,y\in E(X)$;
\item {\em $E$-viable} if every idempotent of $X$ is viable;
\item {\em $E$-central} if for any $e\in E(X)$ and $x\in X$ we have $ex=xe$;
\item {\em $E_{\Uparrow}$-central} if for any $e\in E(X)$ and $x\in {\Uparrow}e$ we have $ex=xe$;
\item {\em $E$-hypercentral} if for any $e\in E(X)$ and $x,y\in X$ with $xy=e$ we have $xe=ex$ and $ye=ey$;
\item {\em $E$-hypocentral} if for any $e\in E(X)$ and $x,y\in X$ with $xy=e$ we have $xe=ex$ or $ye=ey$;
\item {\em $E$-upcentral} if for any idempotents $e,f\in E(X)$ with $fe=e=ef$ and any $x\in\korin{\infty}{H_f}$ we have $xe=ex$.
\end{itemize}
\end{definition}

For any semigroup  these commutativity properties relate as follows.
$$
\xymatrix@C=18pt{
\mbox{$E$-commutative}\ar@{=>}[d]&\mbox{$E$-central}\ar@{=>}[r]\ar@{=>}[l]\ar@{=>}[d]&\mbox{$E_{\Uparrow}$-central}\ar@{<=>}[r]&\mbox{$E$-separated}\ar@{=>}[r]&\mbox{$E$-upcentral}\\
\mbox{$E$-semigroup}&\mbox{viable}\ar@{<=>}[r]&\mbox{$E$-viable}\ar@{<=>}[r]\ar@{<=>}[u]&\mbox{$E$-hypercentral}\ar@{<=>}[u]\ar@{=>}[r]&\mbox{$E$-hypocentral}
}
$$

Nontrivial equivalences and implications in this diagram are proved in the following theorem.

\begin{theorem}\label{t:separ} For a semigroup $X$ the following conditions are equivalent:
\begin{enumerate}
\item $X$ is $E$-separated;
\item $X$ is $E$-viable;
\item $X$ is $E_{\Uparrow}$-central;
\item $X$ is $E$-hypercentral;
\item $X$ is viable.
\end{enumerate}
The equivalent conditions \textup{(1)--(5)} imply the condition
\begin{enumerate}
\item[(6)] $X$ is $E$-hypocentral and $E$-upcentral.
\end{enumerate}
\end{theorem}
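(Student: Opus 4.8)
The plan is to prove the cycle of implications $(1)\Rightarrow(2)\Rightarrow(3)\Rightarrow(4)\Rightarrow(5)\Rightarrow(1)$ and then deduce~(6). The key preliminary remark is that, since $\Updownarrow$ is the smallest semilattice congruence on $X$, every homomorphism of $X$ into a semilattice factors through the quotient map $X\to X/_{\Updownarrow}$; hence $X$ is $E$-separated if and only if no two distinct idempotents of $X$ lie in a single $\Updownarrow$-class. Therefore, assuming $(1)$, for every $e\in E(X)$ the $\two$-class ${\Updownarrow}e$ has $e$ as its unique idempotent, and, being $\two$-trivial by Theorem~\ref{t:Tamura}, it is a unipotent $\two$-trivial semigroup. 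Consequently Corollary~\ref{c:EZK} gives $e\in Z({\Updownarrow}e)$ and Tamura's Theorem~\ref{t:max-ideal} gives that $H_e$ is an ideal of ${\Updownarrow}e$ (here $H_e\subseteq{\Updownarrow}e$ by Proposition~\ref{p:korin}, and $H_e$ is also the maximal subgroup of $e$ inside ${\Updownarrow}e$).

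The heart of the argument is $(1)\Rightarrow(2)$. Fix $e\in E(X)$. By Theorem~\ref{t:C-ideal} we have $\tfrac{H_e}e\subseteq{\Uparrow}e$, so by Proposition~\ref{p:e-viable} it suffices to prove the reverse inclusion ${\Uparrow}e\subseteq\tfrac{H_e}e$. First, combining the two facts just recorded, every $w\in{\Updownarrow}e$ satisfies $ew=we=e\cdot w\in H_e$, so ${\Updownarrow}e\subseteq\tfrac{H_e}e$. Now take any $x\in{\Uparrow}e$, i.e.\ $e\lesssim x$. Proposition~\ref{p:quasi2} then gives $e=ee\lesssim ex\lesssim e$ and $e=ee\lesssim xe\lesssim e$, so $ex$ and $xe$ both lie in ${\Updownarrow}e\subseteq\tfrac{H_e}e$; unfolding the definition of $\tfrac{H_e}e$ for $ex$ and for $xe$ yields $ex=exe\in H_e$ and $xe=exe\in H_e$, whence $x\in\tfrac{H_e}e$. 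Thus ${\Uparrow}e=\tfrac{H_e}e$, $e$ is viable, and since $e$ was arbitrary, $X$ is $E$-viable.

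The remaining four implications are short. $(2)\Rightarrow(3)$: viability of $e$ means ${\Uparrow}e=\tfrac{H_e}e$, so every $x\in{\Uparrow}e$ satisfies $xe=ex$. $(3)\Rightarrow(4)$: if $xy=e\in E(X)$, then $e=xy\lesssim x$ and $e=xy\lesssim y$ by Proposition~\ref{p:quasi2}, so $x,y\in{\Uparrow}e$ and $E_{\Uparrow}$-centrality gives $xe=ex$ and $ye=ey$. $(4)\Rightarrow(5)$: for $x,y\in X$ with $e\defeq xy$ and $f\defeq yx$ idempotent, $E$-hypercentrality applied to $xy=e$ and to $yx=f$ shows that $e$ and $f$ each commute with $x$ and with $y$, and then a short computation ($e=xy=x(yx)y=xfy=f(xy)=fe=ef$ and $f=yx=y(xy)x=yex=e(yx)=ef=e$) gives $xy=yx$. $(5)\Rightarrow(1)$: if distinct idempotents $e,f$ were $\Updownarrow$-equivalent, then $f\in{\Uparrow}e$ and $e\in{\Uparrow}f$, so by Proposition~\ref{p:PW} there are $a,b,c,d\in X^1$ with $e=afb$ and $f=ced$; the computation from the proof of Proposition~\ref{p:PW} (which uses viability) gives $beaf=afbe=e$ and $dfce=cedf=f$, hence $ef=(beaf)f=e$ and $fe=(dfce)e=f$, and applying viability to the pair $(e,f)$ — legitimate since $ef$ and $fe$ are idempotent — forces $e=ef=fe=f$, a contradiction; so no $\Updownarrow$-class contains two distinct idempotents and $X$ is $E$-separated.

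Finally, for~(6): $E$-hypocentrality is a trivial weakening of~(4). For $E$-upcentrality, let $e,f\in E(X)$ with $fe=e=ef$ and $x\in\korin{\infty}{H_f}$; by Proposition~\ref{p:korin} we have $x\in{\Updownarrow}f\subseteq{\Uparrow}f$, while $e=ef\lesssim f$ by Proposition~\ref{p:quasi2}, so $e\lesssim f\lesssim x$, i.e.\ $x\in{\Uparrow}e$, and condition~(3) (equivalent to (1)--(5)) gives $xe=ex$. The main obstacle is $(1)\Rightarrow(2)$: everything there rests on transferring the good behaviour of idempotents inside the $\two$-trivial semigroup ${\Updownarrow}e$ — where Tamura's structure theorems apply — to the generally much larger semigroup ${\Uparrow}e$, which is possible precisely because $E$-separatedness forces ${\Updownarrow}e$ to be unipotent.
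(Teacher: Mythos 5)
Your proof is correct and follows essentially the same route as the paper: the same cycle $(1)\Ra(2)\Ra(3)\Ra(4)\Ra(5)\Ra(1)$, with Tamura's Theorems~\ref{t:Tamura} and~\ref{t:max-ideal} driving $(1)\Ra(2)$ and Proposition~\ref{p:PW} driving $(5)\Ra(1)$. The only (valid) local deviations are that in $(5)\Ra(1)$ you finish by applying viability directly to the pair $(e,f)$ after deriving $ef=e$ and $fe=f$, where the paper instead argues $H_e=H_f$, and that for $E$-upcentrality you use Proposition~\ref{p:korin} together with condition (3), where the paper writes $e=x(x^{n-1}ge)$ via the group inverse $g$ of $x^n$ in $H_f$ and applies condition (4).
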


\begin{proof} We shall prove the implications $(1)\Ra(2)\Ra(3)\Ra(4)\Ra(5)\Ra(1)$ and $(4)\Ra(6)$.
\smallskip

$(1)\Ra(2)$ Assume that $X$ is $E$-separated. To show that $X$ is $E$-viable, take any $e\in E(X)$ and $x\in{\Uparrow}e$. Since $X$ is $E$-separated, the $\two$-class ${\Updownarrow}e$ of $e$ is unipotent. By Tamura's Theorem~\ref{t:max-ideal}, the group $H_e$ is an ideal in ${\Updownarrow}e$. Since ${\Updownarrow}e$ is an ideal in ${\Uparrow}e$, the maximal subgroup $H_e$ is an ideal in $X$. Then $xe,ex\in H_e$ and hence $xe=exe=ex\in H_e$ and $x\in\frac{H_e}e$. So ${\Uparrow}e\subseteq\frac{H_e}e$ and ${\Uparrow}e=\frac{H_e}e$ by Theorem~\ref{t:C-ideal}. Then $\frac{H_e}e={\Uparrow}e$ is a coideal in $X$ and the idempotent $e$ is viable, witnessing that the semigroup $X$ is $E$-viable.
\smallskip

The implication $(2)\Ra(3)$ follows from Proposition~\ref{p:e-viable}.
\smallskip

$(3)\Ra(4)$  Assume that $X$ is $E_{\Uparrow}$-central. To show that $X$ is $E$-hypercentral, take any idempotent $e\in E(X)$ and any elements $x,y\in X$ with $xy=e$. Proposition~\ref{p:quasi2} ensures that $e\lesssim x$ and $e\lesssim y$ and hence $x,y\in{\Uparrow}e$. Applying the $E_{\Uparrow}$-centrality of $X$, we conclude that $ex=xe$ and $ey=ye$.
\smallskip

$(4)\Ra(5)$ Assume that $X$ is $E$-hypercentral. To show that $X$ is viable, take any elements $x,y\in X$ such that $\{xy,yx\}\subseteq E(X)$. The $E$-hypercentrality of $X$ ensures that $xy=xyxy=x(yx)y=(yx)xy=yx(xy)=y(xy)x=yxyx=yx$.
\smallskip

$(5)\Ra(1)$ To derive a contradiction, assume that $X$ is viable but not $E$-separated. Then there exist two distinct idempotents $e,f\in E(X)$ such that ${\Uparrow}e={\Uparrow}f$. By Proposition~\ref{p:PW}, there are elements $a,b,c,d\in X^1$ such that $e=afb$ and $f=ced$. Observe that $afbe=ee=e$ and $(beaf)(beaf)=be(afbe)af=beeaf=beaf$ and hence $afbe$ and $beaf$ are idempotents. The viability of $X$ ensures that $afbe=beaf$. By analogy we can prove that $eafb=e=efbea$, $cedf=f=dfce$ and $fced=f=edfc$. These equalities imply that $H_e=H_f$ and hence $e=f$ because the group $H_e=H_f$ contains a unique idempotent. But the equality $e=f$ contradicts the choice of the idempotents $e,f$.
\smallskip

$(4)\Ra(6)$ Assume that $X$ is $E$-hypercentral. Then $X$ is $E$-hypocentral. To show that $X$ is $E$-upcentral, take any idempotents $e,f\in E(X)$ and any element $x\in\korin{\infty}{H_f}$ such that $fe=e=ef$. By Lemma~\ref{l:pi-well-defined}, there exists a number $n\ge 2$ such that $x^n\in H_f$. Let $g$ be the inverse element to $x^n$ in the group $H_f$. Then $e=fe=x^nge=x(x^{n-1}ge)$. The $E$-hypercentrality of $X$ ensures $ex=xe$.
\end{proof}

\begin{remark} Viable semigroups were introduced and studied by Putcha and Weissglass who proved in \cite[Theorem 6]{PW} that a semigroup $X$ is viable if and only if it is $E$-separated (this is the equivalence $(1)\Leftrightarrow(5)$ in Theorem~\ref{t:separ}). For another condition (involving $\mathcal J$-classes), equivalent to the conditions (1)--(5) of Theorem~\ref{t:separ}, see Theorem 23.7 in \cite{MS}. 
\end{remark}

\begin{example}\rm Any semigroup $X$ with left zero multiplication $xy=x$ is $E$-hypocentral and $E$-upcentral. If $X$ contains more than one element, then $X$ is not $E$-hypercentral. This example shows that condition (6) of Theorem~\ref{t:separ} is not equivalent to conditions (1)--(5). 
\end{example}

\begin{remark} By \cite{ACMU}, \cite{CM}, there exists an infinite $0$-simple congruence-free monoid $X$. Being
congruence-free, the semigroup $X$ is $\two$-trivial. On the other hand, $X$ contains at least two central idempotents: $0$ and $1$.  The polycyclic monoids (see \cite{Bard16}, \cite{Bard20}, \cite{BG1}, \cite{BG2}) have the similar properties. By Theorem 2.4 in \cite{Bard16}, for any cardinal $\lambda\ge 2$ the polycyclic monoid $P_\lambda$ is
congruence-free and hence $\two$-trivial, but its contains two distinct central idempotents $0$ and $1$. These examples show that individual central idempotents are not necessarily viable. On the other hand, if all idempotents of a semigroup are central, then all of them are viable, by Theorem~\ref{t:separ}.
\end{remark}

\section{ $E$-separated $E$-semigroups}

In this section we establish some results on the structure of $E$-separated $E$-semigroups. But first we show that the class of such semigroups contains all duo semigroups and hence all commutative semigroups.

\begin{proposition}\label{p:duo-Esep} Each duo semigroup $X$ is an $E$-separated $E$-semigroup.
\end{proposition}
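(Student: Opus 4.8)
**The plan is to prove the two claims separately: first that a duo semigroup is an $E$-semigroup, and then that it is $E$-separated.**

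For the $E$-semigroup part, I would take idempotents $e,f \in E(X)$ and show $ef \in E(X)$. Since $X$ is duo, $ef \in eX = Xe$, so $ef = ae$ for some $a \in X^1$; similarly $ef \in fX = Xf$, so $ef = fb$ for some $b \in X^1$. Then I would compute $(ef)(ef) = e(ffb) = e(fb) = e(ef) = (ea)e \cdot$... more cleanly: $efef = (ae)(ef) = a(ef)f \cdot$... Let me instead use both factorizations: $ef = fb$ gives $ef \cdot ef = ef \cdot fb = e f f b = efb = e \cdot ef = ae \cdot e = ae = ef$ — wait, I need $ef = ae$ to conclude, and that is exactly one of the factorizations. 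So: $(ef)(ef) = (ef)(fb) = (eff)b = (ef)b$, and then $(ef)b = (ae)b$... this is getting tangled; the clean route is $efef = e(fe)f$ and one shows $fe$ absorbs correctly. I expect the intended argument uses the duo property to write $fe = ef$-adjacent elements and reduce the middle; a safe version: since $eX = Xe$ and $f \in X$, we have $ef \in eX = Xe$ so $ef = ce$; since $fX = Xf$, $ef = fd$; then $efef = ef(ef) = ef(ce) = (efc)e$ and $efc \in eX$... Alternatively, simplest: $efef = (ce)(fd) = c(ef)d = c(fd)d \cdots$. I would settle on the factorization $ef = ce = fd$ and compute $efef = ce \cdot fd = c(ef)d = c \cdot ce \cdot d$ — no. The honest clean computation is $efef = e \cdot fe \cdot f$; write $fe = ge$ (from $fX=Xf$? no, $fe \in Xe$)... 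I will present it as: $ef \in eX \cap Xf$, write $ef = ea' = b'f$, then $(ef)^2 = (ea')(b'f) = e(a'b')f = e \cdot$(something in $X$)$\cdot f$, and crucially $a'b' f = a' \cdot b' f = a'(ef) = a' \cdot ea' = (a'e)a'$, hmm. Given the paper calls this routine, I trust a two-line version exists and will write the cleanest valid one; the main obstacle is picking the right pair of one-sided factorizations so the middle telescopes.

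For the $E$-separated part, by Theorem~\ref{t:separ} it suffices to show $X$ is viable, i.e. if $x,y \in X$ with $\{xy, yx\} \subseteq E(X)$ then $xy = yx$. Here I would invoke Proposition~\ref{p:duo} or argue directly: set $e = xy$ and $f = yx$, both idempotents. Since $X$ is duo, I can move factors across products. The key identities: $e = xy$, so $ex = xyx = x(yx) = xf$, and $f = yx$, so $yf = y(yx) \cdots$; also $xe = x(xy) = x^2 y$. Using duo-ness, $xy \in xX = Xx$, so $xy = ax$ for some $a$; then $e = xy = ax$ gives $ex = axx \cdots$. A cleaner path: $e = xy$ and idempotency give $e = e^n = (xy)^n = x(yx)^{n-1}y = xf^{n-1}y = xfy$ (since $f$ idempotent). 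So $e = xfy$. Similarly $f = yex$. Then $ef = xfy \cdot f = x f (yf)$ and I can push things around with the duo property until $ef = e$ and $fe = f$, and combined with $e,f \in E(X)$ and duo-ness of the subsemigroup they generate, conclude $e = f$. I expect the main obstacle here to be the same as in Proposition~\ref{p:PW}'s proof: showing the two idempotents $xy$ and $yx$ are $\mathcal{H}$-equivalent (same $\mathcal{H}$-class) using only one-sided ideal manipulations, after which uniqueness of the idempotent in a group $\mathcal{H}$-class forces $xy = yx$. I would carry out: $xyX^1 = xyxX^1$? — using $e = xfy$ and $f = yex$ to get $eX^1 = fX^1$ and $X^1 e = X^1 f$, hence $H_e = H_f$, hence $e = f$.

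Alternatively — and this may be the shortest route — I would just verify $E$-hypercentrality directly: given $e \in E(X)$ and $x,y$ with $xy = e$, show $ex = xe$. Since $X$ is duo, $ex = e \cdot x \in Xx = xX$, so $ex = xz$ for some $z \in X^1$; and $xe = x(xy) = x \cdot xy$ while $ex = (xy)x = x(yx)$, so it suffices to show $xy \cdot x = x \cdot xy$. From $xy \in yX = Xy$, write $xy = yw$; then $(xy)x = ywx$ and $x(xy) = x y w \cdots$ — still need care. I think the genuinely efficient argument is via Proposition~\ref{p:duo}: for a duo semigroup the upper $\two$-class is ${\Uparrow}a = \{x : a^\IN \cap XxX \neq \emptyset\}$, so if ${\Uparrow}e = {\Uparrow}f$ for idempotents $e,f$ then $e \in XfX$ and $f \in XeX$, giving $e = pfq$, $f = rep$; duo-ness lets me rewrite these as principal-ideal equalities $XeX = eX = Xe$, and then $eX = fX$, $Xe = Xf$ yields $H_e = H_f$ and $e = f$. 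The main obstacle throughout is the bookkeeping of one-sided factorizations; I will choose whichever reduction keeps the computation to three or four lines and cite Proposition~\ref{p:duo} and Theorem~\ref{t:separ} to do the heavy lifting.
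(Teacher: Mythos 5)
Your proposal is correct and, once you settle on a route, it is essentially the paper's own proof: the idempotency computation you wrote and then doubted is already complete, since from $ef\in Xf=fX$ one gets $ef=fb$ and then $(ef)(ef)=(ef)(fb)=ef\!fb=efb=e(fb)=e(ef)=(ee)f=ef$ with no second factorization needed (the paper uses the symmetric pair $ef=xe$, $fe=yf$ to the same effect). For $E$-separatedness the paper follows exactly your final plan: by Proposition~\ref{p:duo}, two idempotents $e\ne f$ in one $\two$-class satisfy $e\in XfX=Xf=fX$ and $f\in XeX=Xe=eX$, whence $eX^1=fX^1$ and $X^1e=X^1f$, so $H_e=H_f$ and $e=f$, a contradiction.
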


\begin{proof} First we show that $X$ is an $E$-semigroup. Given two idempotents $e,f$, use the duo property of $X$ to find  elements $x,y\in X$ such that $ef=xe$ and $fe=yf$. Then $efef=eyff=eyf=efe=xee=xe=ef$ and hence $ef$ is an idempotent. Therefore, $X$ is an $E$-semigroup. 

Assuming that $X$ is not $E$-separated, we can find an idempotent $e\in E(X)$ whose $\two$-class ${\Updownarrow}e$ contains an idempotent $f\ne e$. By Proposition~\ref{p:duo}, $e\in XfX=Xf=fX$ and $f\in XeX=Xe=eX$. Then $eX^1\subseteq fXX^{1}\subseteq fX^1$, $fX^1\subseteq eXX^1\subseteq eX^1$, $X^1e=X^1Xf\subseteq X^1f$, and $X^1f=X^1Xe\subseteq X^1e$, which implies  $H_f=H_e$ and hence $f=e$ as the group $H_e=H_f$ contains a unique idempotent.
\end{proof}

The following theorem describing properties of $E$-separated $E$-semigroups is the main result of this section. The statements (2), (3) of this theorem holds true for any $E$-separated semigroup.
 
\begin{theorem}\label{t:Esepar} Any $E$-separated $E$-semigroup $X$ has the following properties.
\begin{enumerate}
\item $E(X)$ is a semilattice.
\item For any idempotent $e\in E(X)$ the maximal subgroup $H_e\subseteq X$ is an ideal in the semigroup ${\Uparrow}e$.
\item For any $e\in E(X)$ and $x\in{\Uparrow}e$ we have $ex=xe\in H_e$;
\item For any idempotents $x,y\in E(X)$, the inequality $x\lesssim y$ in $X$ is equivalent to the inequality $x\le y$ in $E(X)$.
\item The map $\pi_{\Updownarrow}:{\Updownarrow}E(X)\to E(X)$ assigning to each element $x\in {\Updownarrow}E(X)$ the unique idempotent in the semigroup ${\Updownarrow}x$ is a well-defined homomorphic retraction of the semigroup ${\Updownarrow}E(X)$ onto $E(X)$.
\item The map $\hbar_{\Updownarrow}:{\Updownarrow}E(X)\to H(X)$, $\hbar_{\Updownarrow}:x\mapsto x\pi_{\Updownarrow}(x)$, is a well-defined homomorphic retraction of the semigroup ${\Uparrow}E(X)$ onto the Clifford part $H(X)$ of $X$.
\item The Clifford part $H(X)$ is a subsemigroup of $X$.
\end{enumerate}
\end{theorem}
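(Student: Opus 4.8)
The strategy is to establish the seven items essentially in the order stated, since each relies on the previous ones, and to reduce everything to the unipotent $\two$-trivial case handled by Tamura's Theorem~\ref{t:max-ideal} together with the characterizations in Theorem~\ref{t:separ}. For (1): since $X$ is an $E$-semigroup, $E(X)$ is a subsemigroup; since $X$ is $E$-separated, hence viable by Theorem~\ref{t:separ}, and any two idempotents $e,f$ satisfy $ef,fe\in E(X)$, viability gives $ef=fe$, so $E(X)$ is $E$-commutative, i.e.\ a semilattice. For (2) and (3): $E$-separatedness makes the $\two$-class ${\Updownarrow}e$ unipotent (its only idempotent is $e$); by Theorem~\ref{t:max-ideal} the group $H_e$ is an ideal of ${\Updownarrow}e$, and since ${\Updownarrow}e$ is an ideal of ${\Uparrow}e$ (both are subsemigroups and $x\lesssim e\lesssim x$ propagates under the multiplications by elements of ${\Uparrow}e$, using Proposition~\ref{p:quasi2}(1)), $H_e$ is an ideal of ${\Uparrow}e$. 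For $x\in{\Uparrow}e$ this gives $xe,ex\in H_e$, and by $E_{\Uparrow}$-centrality (Theorem~\ref{t:separ}) $ex=xe$; since this element lies in the group $H_e$ we get $ex=xe\in H_e$, which is (3). (This reasoning in fact already appears inside the proof of $(1)\Ra(2)$ in Theorem~\ref{t:separ}.)

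For (4): if $x\le y$ in the semilattice $E(X)$ then $xy=y=yx$, and by Proposition~\ref{p:quasi2}(4) (or directly) $xy\lesssim x$, giving $y\lesssim x$; combined with $x\lesssim xy=y$ from Proposition~\ref{p:quasi2} this is not quite $x\lesssim y$ — rather I should argue: $x\le y$ means $x$ absorbs, so $x=xy$, hence $x\lesssim y$ by Proposition~\ref{p:quasi2}(4), that is the easy direction. Conversely, suppose $x\lesssim y$ for idempotents $x,y$. Then $xy\in E(X)$ lies in ${\Updownarrow}x$ (since $xy\lesssim x$ always, and $x=x^2=x\cdot 1$; more carefully, from $x\lesssim y$ and Proposition~\ref{p:quasi2}(1) we get $x=x^2\lesssim xy\lesssim x$, so $xy\in{\Updownarrow}x$). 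Since ${\Updownarrow}x$ is unipotent with unique idempotent $x$, $xy=x$; symmetrically $yx=x$, so $x\le y$ in $E(X)$. For (5): the map $\pi_{\Updownarrow}$ is well-defined precisely because for $x\in{\Updownarrow}E(X)$ the class ${\Updownarrow}x={\Updownarrow}e$ for some idempotent $e$, and by $E$-separatedness this idempotent is unique; it is a retraction since $\pi_{\Updownarrow}(e)=e$ for $e\in E(X)$. Homomorphy: for $x,y\in{\Updownarrow}E(X)$ with $\pi_{\Updownarrow}(x)=e$, $\pi_{\Updownarrow}(y)=f$, I must show $\pi_{\Updownarrow}(xy)=ef$; this follows because $xy\Updownarrow ef$ (the congruence $\Updownarrow$ respects products) and $ef\in E(X)$, so $ef$ is the unique idempotent in ${\Updownarrow}(xy)$.

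For (6): first note $x\pi_{\Updownarrow}(x)=xe$ where $e=\pi_{\Updownarrow}(x)$, and since $x\in{\Updownarrow}e\subseteq{\Uparrow}e$, part (3) gives $xe=ex\in H_e\subseteq H(X)$, so $\hbar_{\Updownarrow}$ does land in $H(X)$; it is a retraction since for $h\in H_e$ we have $\pi_{\Updownarrow}(h)=e$ and $he=h$. Homomorphy: $\hbar_{\Updownarrow}(xy)=xy\,\pi_{\Updownarrow}(xy)=xy\,ef$ where $e=\pi_{\Updownarrow}(x),f=\pi_{\Updownarrow}(y)$; I want this to equal $(xe)(yf)=\hbar_{\Updownarrow}(x)\hbar_{\Updownarrow}(y)$. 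Here is where I must carefully commute idempotents past group elements: using (3), $e$ commutes with everything in ${\Uparrow}e$ (in particular with $x$), $f$ commutes with everything in ${\Uparrow}f$; and one checks $x,y,xy$ all lie in ${\Uparrow}(ef)$ so $ef$ is central there. Then $xy\,ef = x\,(e y)\,f = x e\, y f$ after noting $ey=ye$ — but $y\in{\Uparrow}f$, not obviously ${\Uparrow}e$, so I instead route through $ef$: $efxy = efefxy$, manipulate as in the viability computations, reducing to showing $xefy=xfey$ etc. I expect \textbf{this homomorphy check in (6) to be the main obstacle} — it is a bookkeeping argument juggling three commuting idempotents $e,f,ef$ and the group elements $xe\in H_e$, $yf\in H_f$, $(xy)(ef)\in H_{ef}$, and one must use that $H_e H_f\subseteq H_{ef}$ inside the Clifford part together with (3) applied at each relevant idempotent. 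Finally (7) is immediate from (6): $H(X)=\hbar_{\Updownarrow}({\Updownarrow}E(X))$ is the image of a semigroup homomorphism, hence a subsemigroup of $X$ — but one should double-check that every $h\in H(X)$ indeed lies in ${\Updownarrow}E(X)$, which holds by Proposition~\ref{p:korin} since $h\in H_e\subseteq\korin{\infty}{H_e}\subseteq{\Updownarrow}e$, so that $H(X)\subseteq{\Updownarrow}E(X)$ and $\hbar_{\Updownarrow}$ restricted to $H(X)$ is the identity, exhibiting $H(X)$ as exactly the image.
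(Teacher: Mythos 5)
Your proposal follows essentially the same route as the paper for items (1)--(5) and (7): the paper proves (1) by noting $q(xy)=q(yx)$ for the quotient map $q:X\to X/_{\Updownarrow}$ and invoking unipotence of the common $\two$-class, whereas you invoke viability directly, but these are interchangeable via Theorem~\ref{t:separ}; your (2)--(5) and (7) match the paper's arguments (the paper routes (5) through the isomorphism $q{\restriction}_{E(X)}$ onto its image rather than through the congruence property of $\Updownarrow$, but the content is identical). One cosmetic point: in (4) your first attempt uses the convention $x\le y\Leftrightarrow xy=y$ from the preliminaries, which is evidently a typo there (it would reverse the order on $\two$); your self-corrected reading $x\le y\Leftrightarrow x=xy=yx$ is the one the paper actually uses, and with it your argument for (4) is exactly the paper's.

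The only substantive weakness is the homomorphy check in (6), which you flag as ``the main obstacle'' and leave unfinished. In fact it closes in two lines using exactly the ingredients you already listed, and none of the extra machinery you gesture at ($H_eH_f\subseteq H_{ef}$, the ``viability computations'') is needed. Write $e=\pi_{\Updownarrow}(x)$ and $f=\pi_{\Updownarrow}(y)$, so that $\pi_{\Updownarrow}(xy)=ef$ by (5) and $ef\in E(X)$ by (1). Since $ef\lesssim f\lesssim y$ by Proposition~\ref{p:quasi2}, we have $y\in{\Uparrow}(ef)$, so item (3) applied at the idempotent $ef$ gives $(ef)y=y(ef)$; and $y\in{\Updownarrow}f\subseteq{\Uparrow}f$ gives $fy=yf$. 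Hence
$$\hbar_{\Updownarrow}(xy)=xy(ef)=x(ef)y=xe(fy)=xe(yf)=(xe)(yf)=\hbar_{\Updownarrow}(x)\hbar_{\Updownarrow}(y),$$
which is precisely the chain of equalities in the paper's proof of (6). With that computation supplied, your proof is complete and coincides with the paper's.
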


\begin{proof} Let $X/_{\Updownarrow}$ be the semilattice reflexion of $X$ and $q:X\to X/_{\Updownarrow}$ be the quotient homomorphism.
\smallskip

1. To see that $E(X)$ is a semilattice, take any idempotents $x,y\in E(X)$. Since $X$ is an $E$-semigroup, the products $xy$ and $yx$ are idempotents. Taking into account that $q:X\to X/_{\Updownarrow}$ is a homomorphism onto the semilattice $X/_{\Updownarrow}$, we conclude that $$q(xy)=q(x)q(y)=q(y)q(x)=q(yx)$$ and hence ${\Updownarrow}xy={\Updownarrow}yx$. Since the semigroup $X$ is $E$-separated,  the idempotents $xy$ and $yx$ are equal to the unique idempotent of the unipotent semigroup ${\Updownarrow}xy={\Updownarrow}yx$ and hence $xy=yx$.
\smallskip

2,3. The statements 2 and 3 follows from Theorem~\ref{t:separ} and Proposition~\ref{p:e-viable}.
%
\smallskip

4. Let $x,y$ be two idempotents in $X$. If $x\le y$, then $x=xy$ and hence $h(x)=h(x)h(y)\le h(y)$ for any homomorphism $h:X\to\two$. Then $x\lesssim y$ by the definition of the quasiorder $\lesssim$. Now assume that $x\lesssim y$. Multiplying this inequality by $x$ from both sides and applying Proposition~\ref{p:quasi2}, we obtain $x=xx\lesssim xy\lesssim x$ and hence $xy\in{\Updownarrow}x$. Since $X$ is an $E$-semigroup, the product $xy$ is an idempotent. Since the semigroup ${\Updownarrow}x$ is unipotent, the idempotent $xy\in{\Updownarrow}x$ is equal to the unique idempotent $x$ of ${\Updownarrow}x$. The equality $x=xy$ means $x\le y$, by the definition of the partial order $\le$ on the semilattice $E(X)$.
\smallskip

5. Consider the map $\pi_{\Updownarrow}:{\Updownarrow}E(X)\to E(X)$ assigning to each element $x\in X$ the unique idempotent in the unipotent semigroup ${\Updownarrow}x$. It is clear that $\pi_{\Updownarrow}$ is a retraction of ${\Updownarrow}E(X)$ onto $E(X)$. Since ${\Updownarrow}$ is a semilattice congruence, the quotient semigroup $X/_{\Updownarrow}$ is a semilattice and the quotient map $q:X\to X/_{\Updownarrow}$ is a semigroup homomorphism. By the $\Updownarrow$-unipotence of $X$, the restriction $h\defeq q{\restriction}_{E(X)}:E(X)\to q[E(X)]\subseteq X/_{\Updownarrow}$ is bijective and hence $h$ is a semigroup isomorphism and so is the inverse function $h^{-1}:q[E(X)]\to E(X)$. Then the function $\pi=h^{-1}\circ q{\restriction}_{{\Updownarrow}E(X)}$ is a semigroup homomorphism, being a composition of two homomorphisms.
\smallskip

6. Since the function $\pi_{\Updownarrow}:{\Updownarrow}E(X)\to E(X)$ is well-defined, so is the function $\hbar_{\Updownarrow}:{\Updownarrow}E(X)\to X$, $\hbar_{\Updownarrow}:x\mapsto x\pi_{\Updownarrow}(x)$. To see that $\hbar_{\Updownarrow}$ is a homomorphism, take any elements $x,y\in {\Updownarrow}E(X)$ and applying Theorem~\ref{t:Esepar}(5,3), conclude that
$$\hbar_{\Updownarrow}(xy)=xy\pi_{\Updownarrow}(xy)=xy\pi_{\Updownarrow}(x)\pi_{\Updownarrow}(y)=x\pi_{\Updownarrow}(x)\pi_{\Updownarrow}(y)y=x\pi_{\Updownarrow}(x)y\pi_\Updownarrow(y)=\hbar_{\Updownarrow}(x)\hbar_{\Updownarrow}(y).$$
By Theorem~\ref{t:max-ideal}, for any $e\in E(X)$ and $x\in{\Updownarrow}e$, the group $H_{e}$ is an ideal in ${\Updownarrow}e$ and hence $\hbar_{\Updownarrow}(x)=x\pi_{\Updownarrow}(x)=xe\in H_{e}\subseteq H(X)$. If $x\in H(X)$, then $x\in H_{e}$, and hence $\hbar_{\Updownarrow}(x)=xe=x$. Therefore, $\hbar_{\Updownarrow}:{\Updownarrow}E(X)\to H(X)$ is a well-defined homomorphic retraction of ${\Updownarrow}E(X)$ onto $H(X)$.
\smallskip

7. Since $\hbar_{\Updownarrow}:{\Updownarrow}E(X)\to X$ is a homomorphism, its image $H(X)=\hbar_{\Updownarrow}[{\Updownarrow}E(X)]$ is a subsemigroup of $X$. 
\end{proof}

\section{Characterizing $E$-separated $\pi$-regular $E$-semigroups}

In this section we recognize $E$-separated semigroups among $\pi$-regular $E$-semigroups. We recall that a semigroup $X$  is {\em $\pi$-regular} if for every $x\in X$ there exist $n\in\IN$ and $y\in X$ such that $x^n=x^nyx^n$. The class of $\pi$-regular semigroups includes all eventually Clifford semigroups (called also completely $\pi$-regular semigroups). A semigroup $X$ is {\em eventually Clifford} if $X=\korin{\infty}{H(X)}$. For any semigroup $X$ by $\pi:\korin{\infty}{H(X)}\to E(X)$ we denote the function assigning to each $x\in \korin{\infty}{H(X)}$ the unique idempotent $e\in E(X)$ such that $x^\IN\cap H_e\ne\emptyset$.

\begin{proposition}\label{p:Eup} If a semigroup $X$ is $E$-commutative and $E$-upcentral, then
\begin{enumerate}
\item for every $e,f\in E(X)$ we have $H_eH_f\subseteq H_{ef}$;
\item for every idempotents $e,f\in E(X)$ with $e\le f$ we have $(\korin{\infty}{H_f}\cdot H_e)\cup(H_e\cdot\korin{\infty}{H_f})\subseteq H_{e}$;
\item for every idempotents $e,f\in E(X)$ and every elements $x\in\korin{\infty}{H_e}$ and $y\in\korin{\infty}{H_f}$ we have $(xy)^nef\in H_{ef}$ for all $n\in\IN$;
\item for any $x,y\in\korin{\infty}{H(X)}$ with $xy\in\korin{\infty}{H(X)}$ we have $\pi(x)\pi(y)\le\pi(xy)$;
\item for any $e\in E(X)$ and $x\in X$ with $\{xe,ex\}\subseteq\korin{\infty}{H(X)}$, we have $\pi(xe)=\pi(ex)$;
\item for any $e\in E(X)$ and $x\in\korin{\infty}{H(X)}$ with $xe\in\korin{\infty}{H(X)}$ we have $\pi(xe)=\pi(x)e$.
\end{enumerate}
\end{proposition}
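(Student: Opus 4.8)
All six items rest on one auxiliary fact, which I would isolate first (call it Claim~A): \emph{if $b\le a$ are idempotents of $X$ and $u\in\korin{\infty}{H_a}$, then $ub=bu\in H_b$.} To prove Claim~A: $E$-upcentrality gives $ub=bu$, hence $ub\in bXb$; picking $n$ with $u^n\in H_a$, part~(1) gives $(ub)^n=u^nb\in H_aH_b\subseteq H_{ab}=H_b$, so $ub\in\korin{\infty}{H_b}$, and then Theorem~\ref{t:C-ideal} ($\korin{\infty}{H_b}\subseteq\tfrac{H_b}b$) forces $ub=(ub)b\in H_b$. With Claim~A in hand, parts (2) and (3) are short, (4) is a pull-back of an idempotent, (5) is an elementary manipulation, and only (6) requires real work. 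Note also that $E$-commutativity makes $E(X)$ a subsemigroup (if $e,f\in E(X)$ then $(ef)^2=e(fe)f=e(ef)f=ef$), hence a semilattice.

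\textbf{Parts (1)--(5).} For (1), put $g=ef=fe$, an idempotent with $g\le e,f$; $E$-upcentrality makes $g$ commute with $a,a^{-1}\in H_e$ and with $b,b^{-1}\in H_f$, so $ab=(ae)(fb)=a(ef)b=agb$, and a direct calculation shows $b^{-1}ga^{-1}$ is a two-sided inverse of $agb$ in the local monoid $gXg$ (identity $g$); hence $ab=agb\in H_g=H_{ef}$. For (2), Claim~A gives $xe=ex\in H_e$, so $xa=(xe)a\in H_eH_e=H_e$ and $ax=a(ex)=a(xe)\in H_e$. For (3), with $g=ef=fe$ Claim~A gives $xg=gx\in H_g$ and $yg=gy\in H_g$; one checks that $xy$ commutes with $g$ and $(xy)g=(xg)(yg)\in H_g$, whence $(xy)^ng=((xy)g)^n\in H_g$ for every $n$. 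For (4), set $a=\pi(x)$, $b=\pi(y)$, $c=\pi(xy)$ and fix $m$ with $w:=(xy)^m\in H_c$; by (3), $w(ab)\in H_{ab}$, so multiplying by its inverse $v$ in $H_{ab}$ and using (1) gives $ab=wv\in H_cH_{ab}\subseteq H_{c(ab)}$, and since $ab$ is idempotent, $ab=c(ab)$, i.e.\ $ab\le c$. For (5), fix $k$ large with $(xe)^k\in H_p$, $(ex)^k\in H_q$ where $p=\pi(xe)$, $q=\pi(ex)$; from $(xe)^k\in X^1e$ and $(ex)^k\in eX^1$ one reads off $p\le e$ and $q\le e$ (using $E$-commutativity), hence $e(xe)^k=(xe)^k$ and $(ex)^ke=(ex)^k$; re-parenthesising shows $e(xe)^k=(ex)^ke$, so $(xe)^k=(ex)^k$ and $\pi(xe)=\pi((xe)^k)=\pi((ex)^k)=\pi(ex)$.

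\textbf{Part (6), the hard part.} Write $f=\pi(x)$, $g=fe=ef$, $p=\pi(xe)$. The inequality $g\le p$ is routine (from (3), or (1) plus Claim~A), and $p\le e$ follows as in (5); the real point is the reverse inequality $p\le g$, which cannot come from the binary quasiorder alone because $X$ need not be $E$-separated. My plan is a detour through $ex$. First, $xf=fx\in H_f$ by Theorem~\ref{t:C-ideal}, and $f(xe)=(xf)e=x(fe)=(xe)f$, so $f$ commutes with $xe$; an induction then gives $f(xe)^n=x^ng$. Using $p\le e$ one has $e(xe)^k=(xe)^k$ for large $k$, hence $(ex)^m=e(xe)^{m-1}x=(xe)^{m-1}x$ for large $m$, so the monogenic subsemigroup $(ex)^\IN$ is eventually periodic and $ex\in\korin{\infty}{H(X)}$; by (5), $\pi(ex)=p$. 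Thus $(xe)^{m-1}x\in H_p$ for large $m$, and choosing $j$ large with $w:=(xe)^j\in H_p$ and $wx\in H_p$ we get $px=w^{-1}(wx)\in H_pH_p=H_p$; consequently $H_px\subseteq H_p$ (since $hx=(hp)x=h(px)$ for $h\in H_p$), and so $px^n\in H_p$ for all $n$ by induction. On the other hand $(xe)^kf=f(xe)^k=x^kg\in H_g$ for large $k$ (Claim~A), while $(xe)^kf\in H_pH_f\subseteq H_{pf}$ by (1), so $pf=g$; then by (1), $px^n=p\cdot x^n\in H_pH_f\subseteq H_{pf}=H_g$ for large $n$ (as $x^n\in H_f$). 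Hence $px^n\in H_p\cap H_g$ for large $n$, forcing $H_p=H_g$ and $p=g=\pi(x)e$. The main obstacle throughout is exactly this reverse inequality in (6); the trick that makes it go through is to re-enter the group $H_p$ from the right via the element $ex$ and then to observe that $H_p$ absorbs right multiplication by $x$.
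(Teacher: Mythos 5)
Your parts (1)--(5) and Claim~A are correct. Claim~A is exactly the combination of part~(1) with Theorem~\ref{t:C-ideal} that the paper invokes repeatedly, and since your proof of (1) does not use Claim~A there is no circularity. In places your arguments are cleaner than the paper's: in (3) you replace the paper's induction by the observation that $xy$ commutes with $g=ef$ and $(xy)g=(xg)(yg)\in H_g$, and in (5) you in fact prove the stronger identity $(xe)^k=(ex)^k$ for large $k$, whereas the paper only compares the two $\mathcal H$-classes.

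Part (6), however, has a genuine gap at the sentence ``so the monogenic subsemigroup $(ex)^{\IN}$ is eventually periodic and $ex\in\korin{\infty}{H(X)}$.'' The identity $(ex)^m=(xe)^{m-1}x$ carries no periodicity or finiteness whatsoever: the elements $(xe)^{m-1}x$ may be pairwise distinct and need not lie in the $\mathcal H$-class of any idempotent (nor is eventual periodicity needed for membership in $\korin{\infty}{H(X)}$ --- the point is that neither is established). This claim is the linchpin of your argument: you need $\pi(ex)=p$ to place $(xe)^jx$ in $H_p$, which is your only source of $px\in H_p$, on which everything downstream rests. Worse, the natural attempt to repair it is circular: $(xe)^jx=(xe)^j(px)$ since $(xe)^j=(xe)^jp$, so putting $(ex)^{j+1}$ into $H_p$ essentially presupposes $px\in H_p$. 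The paper closes exactly this hole without ever touching $ex$: with $p\defeq\pi(xe)$ (so $p\le e$), two short inductions using the commutation relations $p(xe)=(xe)p$ and $p(xp)=(xp)p$ supplied by Theorem~\ref{t:C-ideal} give $(xp)^k=(xe)^kp$ and then $(xp)^k=x^kp$; hence for large $n$ the single element $x^np=(xe)^np$ lies in $H_p$ and also in $H_{\pi(x)}H_p\subseteq H_{\pi(x)p}$, forcing $p=\pi(x)p$, i.e.\ $p\le f$. Combined with your own (correctly established) relation $pf=g$, this yields $p=pf=g=\pi(x)e$ at once. Replacing the $ex$ detour by this pair of inductions repairs the proof; as written, part (6) does not go through.
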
 

\begin{proof} 1.   Let $u\in H_e$ and $v\in H_f$. The $E$-upcentrality of $X$ ensures that $efu=uef$ and $efv^{-1}=v^{-1}ef$. Then $efuv=uefv=uevf$ and $uvv^{-1}u^{-1}=ufu^{-1}=uefu^{-1}=efuu^{-1}=efe=ef$ and $v^{-1}u^{-1}uv=v^{-1}ev=v^{-1}efv=efv^{-1}v=eff=ef$. Hence $uv\in H_{ef}$, witnessing that $H_eH_f\subseteq H_{ef}$.
\smallskip

2. For every $e,f\in E(X)$ with $e\le f$ and every $x\in \korin{\infty}{H_f}$, we have
$$xe=xfe\in \korin{\infty}{H_f}fe\subseteq H_{f}e\subseteq H_{fe}=H_e,$$see Theorem~\ref{t:C-ideal} and Proposition~\ref{p:Eup}(1).  By analogy we can prove that $ex\in H_e$.
\smallskip

3. Let $e,f\in E(X)$ and $x\in\korin{\infty}{H_e}$, $y\in\korin{\infty}{H_f}$ be any elements. By induction we shall prove that $(xy)^nef\in H_{ef}$ for every $n\in\IN$. For $n=1$ we have
$$
xyef=xefy\in \textstyle \big(\!\korin{\infty}{H_{e}}\cdot H_{e}\big)\cdot \big(H_{f}\cdot\korin{\infty}{H_{f}}\big)\subseteq H_{e}H_f\subseteq H_{ef}
$$
by the $E$-upcentrality of $X$, Theorem~\ref{t:C-ideal} and Proposition~\ref{p:Eup}(1). 
Assume that for some $n\in\IN$ we have proved that $(xy)^nef\in H_{ef}$. Then
$(xy)^{n+1}ef=(xy)^nxyef\in H_{ef}H_{ef}=H_{ef}$ by the inductive assumption and case $n=1$.
\smallskip

4.  Take any elements $x,y\in \korin{\infty}{H(X)}$ with $xy\in\korin{\infty}{H(X)}$. Since $xy\in \korin{\infty}{H(X)}$, there exists $n\in\IN$ such that $(xy)^n\in H_{\pi(xy)}$. By Proposition~\ref{p:Eup}(1),
$$(xy)^n\pi(x)\pi(y)\in H_{\pi(xy)}H_{\pi(x)}H_{\pi(y)}\subseteq H_{\pi(xy)\pi(x)\pi(y)}.$$
On the other hand, Proposition~\ref{p:Eup}(3) ensures that $$(xy)^n\pi(x)\pi(y)\in H_{\pi(x)\pi(y)}.$$ Hence $\pi(xy)\pi(x)\pi(y)=\pi(x)\pi(y)$, which means that $\pi(x)\pi(y)\le\pi(xy)$.
\smallskip

5. Take any elements $e\in E(X)$ and $x\in X$ such that $\{xe,ex\}\subseteq\korin{\infty}{H(X)}$. By Lemma~\ref{l:pi-well-defined}, there exists $n\in\IN$ such that $(xe)^n\in H_{\pi(xe)}$ and $(ex)^n\in H_{\pi(ex)}$. Then 
\begin{multline*}
$$H_{\pi(xe)}\ni (xe)^{n+1}=x(ex)^ne=x(ex)^n\pi(ex)e=x(ex)^ne\pi(ex)=\\
(xe)^{n+1}\pi(ex)\in H_{\pi(xe)}\pi(ex)\subseteq H_{\pi(xe)\pi(ex)}
\end{multline*} and hence $\pi(xe)=\pi(xe)\cdot\pi(ex)$. By analogy we can prove that $\pi(ex)=\pi(ex)\cdot\pi(xe)$. Then $\pi(xe)=\pi(xe)\pi(ex)=\pi(ex)\pi(xe)=\pi(ex)$.


\smallskip

6. Take any $e\in E(X)$ and $x\in \korin{\infty}{H(X)}$ with $xe\in\korin{\infty}{H(X)}$. Find $n\in\IN$ such that $\{(xe)^n,x^n\}\subseteq H(X)$.  Let $f\defeq\pi(xe)$ and observe that $H_f\ni (xe)^n=(xe)^ne\subseteq H_fe\subseteq H_{fe}$ implies $f=fe$.

By induction we shall prove that $(xf)^k=(xe)^k f$. For $k=1$ this follows from $f=ef$. Assume that for some $k\in\IN$ we have $(xf)^k=(xe)^kf$.
By the inductive assumption and Theorem~\ref{t:C-ideal}, 
$$(xf)^{k+1}=(xf)^kxf=(xe)^kfxef=(xe)^k\pi(xe)xef=(xe)^kxe\pi(xe)f=(xe)^{k+1}ff=(xe)^{k+1}f.$$ This complete the inductive step and also the proof of the equality $(xf)^k=(xe)^kf$ for all $k\in\IN$. 

For $k=n$ we obtain
$$(xf)^n=(xe)^nf\in H_{\pi(xe)}f\subseteq H_{\pi(xe)f}=H_f,$$which implies $xf\in\korin{\infty}{H_f}$ and $\pi(xf)=f$.

By induction we shall prove that $(xf)^k=x^k f$. For $k=1$ this is trivial. Assume that for some $k\in\IN$ we have proved that $(xf)^k=x^kf$.
By the inductive assumption and Theorem~\ref{t:C-ideal},
$$(xf)^{k+1}=(xf)^kxf=x^kfxf=x^k\pi(xf)xf=x^kxf\pi(xf)=x^{k+1}ff=x^{k+1}f.$$ This complete the inductive step and also the proof of the equality $(xf)^k=x^kf$ for all $k\in\IN$. 

The choice of $n$ ensures that $x^n\in H(X)$ and hence $x^n\in H_{\pi(x)}$ and $x^n=x^n\pi(x)$. By Proposition~\ref{p:Eup}(4), $\pi(x)e\le \pi(xe)=f$ and hence $\pi(x)e=\pi(x)ef$. Then
$$H_{\pi(x)e}\ni x^ne=x^n\pi(x)e=x^n(\pi(x)ef)=(x^n\pi(x))fe=x^nfe=(xf)^ne\in H_fe\subseteq H_{fe}$$and finally, $\pi(x)e=fe=f=\pi(xe)$.
\end{proof}

Now we are able to prove the main result of this section.

\begin{theorem}\label{t:central} For a $\pi$-regular $E$-semigroup $X$, the following conditions are equivalent:
\begin{enumerate}
\item ${\Updownarrow}{e}=\korin{\infty}{H_e}$ for every $e\in E(X)$;
\item $X$ is $E$-separated;
\item $X$ is $E$-upcentral, $E$-hypocentral, and $E$-commutative.
\end{enumerate}
\end{theorem}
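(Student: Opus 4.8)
The plan is to prove the cycle $(2)\Ra(3)\Ra(1)\Ra(2)$, using the already-established Theorem~\ref{t:separ} for the first implication and the structural consequences of $E$-upcentrality collected in Proposition~\ref{p:Eup} for the hard direction $(3)\Ra(1)$. The implication $(1)\Ra(2)$ will be the easiest: if ${\Updownarrow}e=\korin\infty{H_e}$ for every idempotent $e$, then for any two distinct idempotents $e,f\in E(X)$ we cannot have ${\Updownarrow}e={\Updownarrow}f$, since then $e,f$ would both be idempotents in the group $\korin\infty{H_e}\cap E(X)=\{e\}$, forcing $e=f$; hence ${\Updownarrow}e\ne{\Updownarrow}f$ and so some homomorphism $X\to\two$ (equivalently, some homomorphism to a semilattice) separates $e$ and $f$. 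For $(2)\Ra(3)$: an $E$-separated $E$-semigroup is $E$-commutative by Theorem~\ref{t:Esepar}(1), and it is viable hence $E$-hypercentral hence $E$-hypocentral by Theorem~\ref{t:separ}; finally it is $E$-upcentral by the implication $(4)\Ra(6)$ of Theorem~\ref{t:separ}. (Here we use that $X$ being $\pi$-regular is irrelevant for this direction.)

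The substantive part is $(3)\Ra(1)$. Assume $X$ is a $\pi$-regular $E$-semigroup that is $E$-upcentral, $E$-hypocentral and $E$-commutative. Fix $e\in E(X)$. The inclusion $\korin\infty{H_e}\subseteq{\Updownarrow}e$ is Proposition~\ref{p:korin}, so we must show ${\Updownarrow}e\subseteq\korin\infty{H_e}$. Take $x\in{\Updownarrow}e$, i.e. $\chi(x)=\chi(e)$ for every homomorphism $\chi:X\to\two$. Since $X$ is $\pi$-regular, there is $n\in\IN$ with $x^n\in x^nXx^n$; then $x^n$ lies in the eventually Clifford part, so $\korin\infty{H(X)}$ is "large enough" and we may set $g\defeq\pi(x^n)\in E(X)$ (using Lemma~\ref{l:pi-well-defined} and the well-definedness of $\pi$ recalled at the start of Section~5), with $x^m\in H_g$ for all sufficiently large $m$; equivalently $x\in\korin\infty{H_g}$. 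By Proposition~\ref{p:korin}, $x\in{\Updownarrow}g$, and since $x\in{\Updownarrow}e$ and $\Updownarrow$ is a congruence (in fact the smallest semilattice congruence), we get ${\Updownarrow}e={\Updownarrow}x={\Updownarrow}g$, i.e. $q(e)=q(g)$ where $q:X\to X/_{\Updownarrow}$ is the semilattice reflexion. The goal is now reduced to showing $e=g$: once $g=e$ we have $x\in\korin\infty{H_e}$ as required.

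To prove $e=g$ from $q(e)=q(g)$, I would run the same kind of argument as in $(5)\Ra(1)$ of Theorem~\ref{t:separ}, but now only having $E$-commutativity, $E$-hypocentrality and $E$-upcentrality at my disposal rather than full viability. Since $q(e)=q(g)$ in the semilattice $X/_{\Updownarrow}$, Proposition~\ref{p:smallest-pi} (the inner description ${\Uparrow}e$ = smallest coideal) together with Proposition~\ref{p:Upclass} gives that $e$ lies in the iterated sets ${\Uparrow}_{\!k}g$ and vice versa; unwinding one or two steps produces elements witnessing $e\in X^1(\text{powers of }g)X^1$-type membership and symmetrically for $g$. Here is where $E$-upcentrality and the lemmas of Proposition~\ref{p:Eup} do the real work: first use $E$-commutativity so that $eg=ge$ is an idempotent $h\defeq eg$, then show $h=e$ and $h=g$. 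For $h=e$: from $q(e)=q(g)$ we get $q(h)=q(e)q(g)=q(e)$, so $h$ and $e$ are idempotents with ${\Updownarrow}h={\Updownarrow}e$; but $h\le e$ in $E(X)$ (since $he=ege=ge\cdot e$... indeed $he=eg e=e g=h$ using $eg=ge$ and $e^2=e$), so $e\lesssim h$ combined with Proposition~\ref{p:quasi2} and the reduction argument — multiply $e\lesssim h\lesssim e$ and use $E$-hypocentrality / the fact that $eh$ is an idempotent in ${\Updownarrow}e$ — we are again reduced to a unipotence-type statement. The cleanest route is: the sub-semigroup ${\Updownarrow}e$ contains both idempotents $e$ and $g$ (as $e,g\in{\Updownarrow}e$), it is $\two$-trivial by Tamura's Theorem~\ref{t:Tamura}, and I would show directly that a $\two$-trivial $E$-commutative, $E$-upcentral, $E$-hypocentral $\pi$-regular semigroup is unipotent — then $e=g$. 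For that last claim: within the unipotent-target we have two idempotents $e,g$ with $e=eg=ge$ (by the above, after symmetrizing) and $g\le e$; apply $E$-upcentrality with the pair $(g,e)$... wait, we need $eg=g=ge$ to invoke $E$-upcentrality with $f=e$, $e:=g$; and $E$-upcentrality then forces, via $x\in\korin\infty{H_e}$ with $x$ chosen so that $x^m=e$, the commutation $ge=eg$, which we already have — so instead I would use Proposition~\ref{p:Eup}(2): $g=ge\in H_e\cdot e$-type membership forces $g\in H_e$, hence $g$ is an idempotent in the group $H_e$, hence $g=e$.

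The main obstacle, and the place to be careful, is exactly this reduction of "$q(e)=q(g)$" to "$e=g$" without full viability: the implication $(5)\Ra(1)$ in Theorem~\ref{t:separ} used viability to turn membership relations into equalities of $\mathcal H$-classes, and I must check that $E$-commutativity plus $E$-upcentrality (and $\pi$-regularity, to have $\pi$ defined and the Clifford structure from Proposition~\ref{p:Eup} available) genuinely suffice. I expect that the correct organization is: show ${\Updownarrow}e$ is unipotent by showing its unique... that is, show any idempotent $f\in{\Updownarrow}e$ equals $e$, by first noting $ef=fe$ ($E$-commutativity), $ef\in{\Updownarrow}e$ (Proposition~\ref{p:quasi2}), $ef$ idempotent ($E$-semigroup), so $ef\le e$ and $ef\le f$ in $E(X)$; then $q(f)=q(e)$ gives $q(ef)=q(e)=q(f)$; and finally apply Proposition~\ref{p:Eup}(2) with the pair $ef\le e$ to an element of $\korin\infty{H_e}$ whose relevant power is $e$, together with $f\in\korin\infty{H_f}$, to conclude $f\in H_{ef}=H_e$ after identifying $ef=f$... the bookkeeping of which idempotent is "on top" is the fiddly point. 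Once unipotence of ${\Updownarrow}e$ is in hand, $e=g$ is immediate and $(3)\Ra(1)$ follows, closing the cycle.
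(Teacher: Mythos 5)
Your overall architecture ($(1)\Ra(2)\Ra(3)\Ra(1)$ with all the work in $(3)\Ra(1)$) matches the paper, and the first two implications are fine. But the hard direction contains two genuine gaps. First, you assert that $\pi$-regularity of $X$ already places $x^n$ in the eventually Clifford part, so that $g\defeq\pi(x^n)$ is defined. That is false in general: $x^n=x^nyx^n$ only makes $x^n$ a regular element, and regular elements need not lie in any subgroup (the bicyclic monoid is regular but not completely regular). The paper's proof devotes a separate Claim to showing that $X$ is eventually Clifford, and that Claim uses $E$-hypocentrality in an essential way: from $xy=e$ one gets $xe=ex$ \emph{or} $ye=ey$, and a case analysis shows the two idempotents $e=x^ny$ and $f=yx^n$ coincide, whence $x^n\in H_e$. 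Without some such argument your function $\pi$ is simply not defined at $x$.

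Second, and more seriously, the reduction of ``$q(e)=q(g)$'' to ``$e=g$'' is never actually carried out. You correctly reduce to: two idempotents $e,f$ with ${\Updownarrow}e={\Updownarrow}f$, set $h=ef=fe\le e$ and $h\le f$; but then you need to climb back \emph{up} from $h$ to $e$ and to $f$, and the tool you invoke, Proposition~\ref{p:Eup}(2), only pushes products \emph{down} into $H_h$ — it cannot show $e\in H_h$ or $f\in H_h$. What is actually needed is the statement that ${\Uparrow}e=\{x\in X:e\le\pi(x)\}$ (Claim~\ref{cl:Up} in the paper): granting it, $f\in{\Updownarrow}e\subseteq{\Uparrow}e$ gives $e\le\pi(f)=f$ and symmetrically $f\le e$, so $e=f$. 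Proving that claim amounts to showing that $\{x:e\le\pi(x)\}$ is a \emph{prime coideal} (so that minimality from Proposition~\ref{p:smallest-pi} applies); the subsemigroup half follows from Proposition~\ref{p:Eup}(4), but the ideal property of the complement is a substantial argument combining $E$-hypocentrality with Proposition~\ref{p:Eup}(6), a reduction to the normalized situation $ex=x$, $ye=y$, $xy=e$, and an $\mathcal H$-class computation forcing $x\in H_e$ — a contradiction. Your sketch contains no substitute for this step, and you yourself flag it as ``the fiddly point''; as written, the proof does not close.
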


\begin{proof} We shall prove the implications $(1)\Ra(2)\Ra(3)\Ra(1)$. Let $q:X\to X/_{\Updownarrow}$, $q:x\mapsto {\Updownarrow}x$, be the quotient homomorphism of $X$ onto its semilattice reflexion.
\smallskip

$(1)\Ra(2)$ If ${\Updownarrow}{e}=\korin{\infty}{H_e}$ for every $e\in E(X)$, then for every distinct idempotents $e,f\in E(X)$ we have $$q(e)={\Updownarrow}e=\korin{\infty}{H_e}\ne\korin{\infty}{H_f}={\Updownarrow}f=q(f),$$which means that the semigroup $X$ is $E$-separated.
\smallskip

$(2)\Ra(3)$ If $X$ is $E$-separated, then $X$ is $E_{\Uparrow}$-central and $E$-hypocentral by Theorem~\ref{t:separ}. To see that $X$ is $E$-commutative, take any idempotents $x,y\in E(X)$. Since $X$ is an $E$-semigroup, the products $xy,yx$ are idempotents. By Theorem~\ref{t:separ}, the $E$-separated semigroup $X$ is viable and hence $xy=yx$.
\smallskip

$(3)\Ra(1)$ Assume that a $\pi$-regular semigroup $X$ is $E$-upcentral, $E$-hypocentral, and $E$-commutative. 

\begin{claim} The semigroup $X$ is eventually Clifford.
\end{claim}

\begin{proof} Take any $x\in X$ and using the $\pi$-regularity of $X$, find $n\in\IN$ and $y\in X$ such that $x^n=x^nyx^n$. It follows that $e=x^ny$ and $f=yx^n$ are idempotents. Since $X$ is $E$-hypocentral, $e=x^ny$ implies $x^ne=ex^n$ or $ey=ye$. If $x^ne=ex^n$, then
$f=ff=(yx^n)(yx^n)=y(x^ny)x^n=yex^n=yx^ne=fe$. If $ey=ye$, then
$f=ff=(yx^n)(yx^n)=y(x^ny)x^n=yex^n=eyx^n=ef=fe$. In both cases we obtain $f=fe$.

On the other hand, by the $E$-hypocentrality of $X$, the equality $f=yx^n$ implies $fy=yf$ or $fx^n=x^nf$. If $fy=yf$, then
$e=ee=x^nyx^ny=x^nfy=x^nyf=ef$. If $fx^n=x^nf$, then $e=ee=x^nyx^ny=x^nfy=fx^ny=fe=ef$. In both cases we obtain $e=ef$. Therefore, $e=ef=f$.

Observe that $eX^1=x^nyX^1\subseteq x^nX^1$ and $x^nX^1=x^nyx^nX^1=ex^nX^1\subseteq eX^1$ and hence $eX^1=x^nX^1$. On the other hand, $X^1x^n=X^1x^nyx^n\subseteq X^1yx^n=X^1f=X^1e$ and $X^1e=X^1f=X^1yx^n\subseteq X^1x^n$ and hence $X^1e=X^1x^n$. The equalities $eX^1=x^nX^1$ and $X^1e=X^1x^n$ imply $x^n\in H_e$. Then $x\in\korin{\infty}{H_e}\subseteq\korin{\infty}{H(X)}$.
\end{proof}

Since the semigroup $X$ is eventually Clifford, the map $\pi:X\to E(X)$ is well-defined on the whole semigroup $X=\korin{\infty}{H(X)}$.

\begin{claim}\label{cl:Up} For every $e\in E(X)$, the upper $\two$-set ${\Uparrow}e$ is equal to the set $${\Uparrow}_{\!\pi} e\defeq\{x\in X:e\le\pi(x)\}.$$
\end{claim}

\begin{proof} Given any $x\in{\Uparrow}_{\!\pi} e$, find $n\in\IN$ such that $x^n\in H_{\pi(x)}$ and conclude that $e\le\pi(x)\Updownarrow x$ implies $x\in{\Uparrow}e$, by Proposition~\ref{p:quasi2}. Therefore, ${\Uparrow}_{\!\pi} e\subseteq{\Uparrow}e$. The equality ${\Uparrow}_\pi e={\Uparrow}e$ will follow from the minimality of the prime coideal ${\Uparrow}e$ as soon as we check that the set ${\Uparrow}_{\!\pi} e$ is a prime coideal in $X$.

By Proposition~\ref{p:Eup}(4), for every $x,y\in {\Uparrow}_{\!\pi} e$ we have $e=ee\le\pi(x)\pi(y)\le\pi(xy)$ and hence $xy\in {\Uparrow}_{\!\pi} e$ and ${\Uparrow}_{\!\pi} e$ is a semigroup. Next, we show that $I\defeq X\setminus {\Uparrow}_{\!\pi} e$ is an ideal in $X$. Assuming that $I$ is not an ideal, we can find elements $x\in I$ and $y\in X$ such that $xy$ or $yx$ belongs to $X\setminus I={\Uparrow}_{\!\pi} e$. First we consider the case $xy\in {\Uparrow}_{\!\pi} e$. By Theorem~\ref{t:Esepar}(2) , $exy\in H_e$ and hence there exists an element $g\in H_e$ such that $exyg=e$. Assuming that $ex\in {\Uparrow}_{\!\pi}e$ and applying Proposition~\ref{p:Eup}(6), we conclude that $e\le \pi(ex)=\pi(eex)=e\pi(ex)\le e$ and hence $e=\pi(ex)$. Applying Proposition~\ref{p:Eup}(6) once more, we conclude that $e=\pi(ex)=e\pi(x)\le\pi(x)$ and $x\in{\Uparrow}_{\!\pi}e$, which contradicts the choice of $x$. Therefore, $ex\notin {\Uparrow}_{\!\pi}e$. Replacing the elements $x,y$ by $ex$ and $yg$, we can assume that $ex=x$, $ye=y$ and $xy=e$. Consider the product $f=yx$ and observe that $ff=yxyx=yex=yx=f$, which means that $f$ is an idempotent. By the $E$-hypocentrality of $X$, the equality $xy=e$ implies $xe=ex$ or $ye=ey$. If $xe=ex$, then $f=yx=yex=yxe=fe$. If $ye=ey$, then $f=yx=yex=eyx=ef=fe$. In both cases we conclude that $f=fe$. By the $E$-hypocentrality of $X$, the equality $f=yx$ implies $fy=yf$ or $fx=xf$. If $fy=yf$, then $f=ef=xyf=xfy=xyxy=ee=e$. If $fx=xf$, then $f=fe=fxy=xfy=xyxy=e$. In both cases we obtain $e=f$.

Now observe that $xX^1=exX^1\subseteq eX^1$ and $eX^1=xyX^{-1}\subseteq xX^1$, which implies $xX^1=eX^1$. On the other hand, $X^1x=X^1ex=X^{1}xyx\subseteq X^1yx=X^1f=X^1e$ and $X^1e=X^1f=X^{1}yx\subseteq X^{1}x$, which implies $X^1x=X^1e$. Therefore, $x\in H_e\subseteq {\Uparrow}_{\!\pi} e$, which contradicts the choice of $x$. By analogy we can derive a contradiction from the assumption $yx\in {\Uparrow}_{\!\pi} e$.
Those contradictions show that ${\Uparrow}_{\!\pi} e$ is a prime coideal, equal to ${\Uparrow}e$.
\end{proof} 

Now we can prove that for every $e\in E(X)$ its $\two$-class ${\Updownarrow}e$ equals $\korin{\infty}{H_e}$. By Proposition~\ref{p:korin}, $\korin{\infty}{H_e}\subseteq{\Updownarrow}e$. To prove that $\korin{\infty}{H_e}={\Updownarrow}e$, choose any element $x\in{\Updownarrow}e$. Since $X$ is eventually Clifford, there exists an idempotent $f\in E(X)$ such that $x\in\korin{\infty}{H_f}$. Then there exists $n\in\IN$ such that $x^n\in H_f$ and hence $f\Updownarrow x^n\Updownarrow x\Updownarrow e$, see Proposition~\ref{p:quasi2}. By Claim~\ref{cl:Up},  $f\in{\Updownarrow}e\subseteq{\Uparrow}e=\{y\in X:e\le\pi(y)\}$ and hence $e\le f$. By analogy, $e\in {\Updownarrow}e={\Updownarrow}f\subseteq{\Uparrow}f=\{y\in X:f\le\pi(y)\}$ implies $f\le \pi(e)=e$. 
The inequalities $e\le f$ and $f\le e$ imply $e=f$ and hence $x\in\korin{\infty}{H_f}=\korin{\infty}{H_e}$, and finally, ${\Updownarrow}e=\korin{\infty}{H_e}$.
\end{proof}

Theorems~\ref{t:Esepar} and \ref{t:central} imply the following theorem describing  properties of $E$-hypercentral $\pi$-regular $E$-semigroups.

\begin{theorem}\label{t:EeClif} Every $E$-separated $\pi$-regular $E$-semigroup $X$ has the following properties.
\begin{enumerate}
\item $X$ is eventually Clifford and $E(X)$ is a semilattice.
\item For every idempotent $e\in E(X)$ we have ${\Updownarrow}e=\korin{\infty}{H_e}$ and ${\Uparrow}e=\{x\in X:e\le\pi(x)\}$.
\item For any idempotent $e\in E(X)$ the maximal subgroup $H_e\subseteq X$ is an ideal in the semigroup ${\Uparrow}e$.
\item For any $e\in E(X)$ and $x\in{\Uparrow}e$ we have $ex=xe$;
\item The map $\pi:X\to E(X)$ is a homomorphic retraction of $X$ onto $E(X)$.
\item The map $\hbar:X\to H(X)$, $\hbar:x\mapsto x\pi(x)$, is a homomorphic retraction of  $X$ onto its Clifford part $H(X)$.
\item The Clifford part $H(X)$ is a subsemigroup of $X$.
\end{enumerate}
\end{theorem}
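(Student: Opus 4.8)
The plan is to read off each of the seven assertions from Theorem~\ref{t:central}, Theorem~\ref{t:Esepar}, and the two Claims established inside the proof of Theorem~\ref{t:central}. Since $X$ is an $E$-separated $\pi$-regular $E$-semigroup, condition (2) of Theorem~\ref{t:central} is satisfied, so all the equivalent conditions (1)--(3) of that theorem hold; moreover, in the course of its proof we showed that $X$ is eventually Clifford and that ${\Uparrow}e=\{x\in X:e\le\pi(x)\}$ for every $e\in E(X)$ (Claim~\ref{cl:Up}). First I would dispose of the assertions that are direct citations: (1) combines ``$X$ is eventually Clifford'' (the Claim in the proof of Theorem~\ref{t:central}) with ``$E(X)$ is a semilattice'' (Theorem~\ref{t:Esepar}(1)); (2) combines condition (1) of Theorem~\ref{t:central} with Claim~\ref{cl:Up}; and (3), (4), (7) are, respectively, Theorem~\ref{t:Esepar}(2), a weakening of Theorem~\ref{t:Esepar}(3), and Theorem~\ref{t:Esepar}(7).

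The main (and essentially only nontrivial) step is to promote the retractions of Theorem~\ref{t:Esepar}(5,6), which are defined on the subsemigroup ${\Updownarrow}E(X)$, to retractions of the whole semigroup $X$. For this I would first verify the identity ${\Updownarrow}E(X)=X$: since $X$ is eventually Clifford, $X=\korin{\infty}{H(X)}=\bigcup_{e\in E(X)}\korin{\infty}{H_e}$, and by condition (1) of Theorem~\ref{t:central} each set $\korin{\infty}{H_e}$ equals ${\Updownarrow}e$, whence $X=\bigcup_{e\in E(X)}{\Updownarrow}e={\Updownarrow}E(X)$. Next I would check that the retraction $\pi_{\Updownarrow}$ of Theorem~\ref{t:Esepar}(5) coincides with $\pi$: if $x\in\korin{\infty}{H_e}$, say $x^n\in H_e$, then $x\in{\Updownarrow}e$ by Proposition~\ref{p:korin}, so ${\Updownarrow}x={\Updownarrow}e$ has $e$ as its unique idempotent, and therefore $\pi_{\Updownarrow}(x)=e=\pi(x)$. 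Consequently $\hbar_{\Updownarrow}=\hbar$ as well, and statements (5) and (6) follow at once from Theorem~\ref{t:Esepar}(5,6).

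I do not expect any real obstacle: the theorem is a repackaging of results already proved, and the single verification needed — the identity ${\Updownarrow}E(X)=X$ together with the coincidence $\pi=\pi_{\Updownarrow}$ — is an immediate consequence of eventual Cliffordness and condition (1) of Theorem~\ref{t:central}.
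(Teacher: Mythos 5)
Your proposal is correct and follows exactly the route the paper intends: the paper offers no written proof beyond the remark that the theorem follows from Theorems~\ref{t:Esepar} and \ref{t:central}, and your derivation — citing the two Claims inside the proof of Theorem~\ref{t:central}, observing that eventual Cliffordness plus ${\Updownarrow}e=\korin{\infty}{H_e}$ gives ${\Updownarrow}E(X)=X$, and identifying $\pi$ with $\pi_{\Updownarrow}$ via Proposition~\ref{p:korin} and unipotence of the $\two$-classes — is precisely the missing bookkeeping. No gaps.
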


\end{document}